\newtheorem{teo}{Theorem}[section]
\begin{document}

\addcontentsline{toc}{section}{Abdullaev R.Z, B.A.~Madaminov
Isomorphisms and isometries of $F$-spaces of $log$-integrable
measurable functions}

\begin{center}
\bf Isomorphisms and isometries of $F$-spaces of $log$-integrable measurable functions

R.Z.~Abdullaev, B.A.~Madaminov

arustambay@yandex.com;  \ aabekzod@mail.ru
\end{center}

Mathematics Subject Classification 2010: 46B04, 46E30

\begin{abstract}
In the paper, it is given isomorphic classification of $F$-spaces
of $log$-integrable measurable functions constructed using
different measure spaces. At the same time, it is proved that such
spaces are non-isometric.
\end{abstract}

\section{Introduction} \
One of the important classes of Banach functional spaces are
spaces $L_{p}(\Omega, \mathcal A, \mu),$ $1\leq p< \infty$ of all
$p$-th power integrable measurable functions given on the
measurable space $(\Omega, \mathcal A, \mu)$ with the finite
measure $\mu$ (almost everywhere equal functions are identified).
The study of isometries of Banach spaces $L_{p}$ was initiated by
S. Banach \cite{B}, who gave a description of all isometries for
spaces $L_p[0,1],$ $p\neq 2$. In \cite{L}, J. Lamperti gave
characterization of all linear isometries for $L_p$-spaces
$L_p(\Omega,\mathcal A,\mu)$ where $(\Omega, \mathcal A,\mu)$ is
an arbitrary space with the finite measure $\mu.$ The final result
in this setting is due to Yeadon \cite{Y} who gave a complete
description of all isometries between $L_p$-spaces associated with
different measure spaces. One of the corollaries of such
descriptions of isometries in spaces $L_{p}(\Omega, \mathcal A,
\mu)$ is the establishment of isometry for $L_{p}$-spaces
$L_{p}(\Omega, \mathcal A, \mu)$  and $L_{p}(\Omega, \mathcal A,
\nu)$ in the case when the measures $\mu$  and  $\nu$  are
equivalent.

An important metrizable analogue of Banach spaces $L_{p}$ are
$F$-spaces $L_{\log}(\Omega, \mathcal A,\mu)$ of $log$-integrable
measurable functions introduced in the work \cite{DSZ}. An
$F$-space $L_{\log}(\Omega, \mathcal A,\mu)$ is defined by the
equality
$$
L_{\log}(\Omega, \mathcal A, \mu)=\{f \in L_0(\Omega, \mathcal A,
\mu): \int \limits_{\Omega} \log(1+|f|)d\mu< + \infty\}
$$
where $ L_0(\Omega, \mathcal A, \mu$ is the algebra of all
measurable functions given on $(\Omega, \mathcal A, \mu)$ (almost
everywhere equal functions are identified).  By virtue of the
inequality
$$log(|f(\omega)|)\leq\frac{1}{p}|f(\omega)|^p, \ \omega \in
\Omega, \ p\in [1,\infty),$$ the inclusion $L_{p}(\Omega, \mathcal
A, \mu)\subset L_{log}(\Omega, \mathcal A, \mu)$ is always true.

In \cite{DSZ}, it was established that $L_{log}(\Omega, \mathcal
A, \mu)$ is a subalgebra in the algebra $L_0(\Omega, \mathcal A,
\mu).$ In addition, a special $F$-metric $\rho(f,g)$ has been
introduced in $L_{log}(\Omega, \mathcal A, \mu),$
$$
\rho(f,g)=\int_{\Omega} \log(1+|f-g|)d\mu, \ f, g \in
L_{\log}(\Omega, \mathcal A, \mu).
$$
The pair $(L_{\log}(\Omega, \mathcal A, \mu), \rho)$ is a complete
metric topological vector space with respect to this measure, and
the operation of multiplication $f\cdot g$ is continuous in the
totality of variables.

It is evident, algebras $ L_0(\Omega, \mathcal A, \mu)$ and $
L_0(\Omega, \mathcal A, \nu)$ coincide when the measures  $\mu$
and $\nu$ are equivalent. This fact is no longer true for the
algebras $L_{log}(\Omega, \mathcal A, \mu)$ and $L_{log}(\Omega,
\mathcal A, \nu).$ In the work \cite{AC}, it was shown that
$L_{log}(\Omega, \mathcal A, \mu)= L_{log}(\Omega, \mathcal A,
\nu)$ for equivalent measures $\mu$ and $\nu$ if and only if
$\frac{d\nu}{d\mu} \in L_{\infty}(\Omega, \mathcal A, \mu)$  and
$\frac{d\mu}{d\nu} \in L_{\infty}(\Omega, \mathcal A, \nu),$ where
$ L_{\infty}(\Omega, \mathcal A, \mu)$ is the algebra of all
essentially bounded measurable functions given on $(\Omega,
\mathcal A, \mu)$ (almost everywhere equal functions are
identified), $\frac{d\nu}{d\mu}$ (respectively,
$\frac{d\mu}{d\nu}$) is the Radon-Nikodym derivation of the
measure $ \nu $ (respectively, $\mu$) with respect to the measure
$\mu$ (respectively,  $\nu$).

Isometries on these $F$-spaces were considered in [5], [6]. In
these papers, a description of isometries on $F$-spaces was given.
In contrast to these results, in this paper we establish a
necessary and sufficient condition for the existence of isometries
and isomorphisms of algebras of log-integrable functions
constructed by different measures $\mu$ and $\nu.$ The
relationship between these isometries and isomorphisms is also
studied. In this case, conditions are imposed only on the
Radon-Nikodym derivatives $\frac{d\nu}{d\mu}.$

Naturally the problem arises to find the necessary and sufficient
conditions providing an isomorphism of the algebras $L_{log}
(\Omega, \mathcal A, \mu)$ and $L_{log}(\Omega, \mathcal A, \nu)$
for the equivalent measures $\mu$ and $\nu.$ The solution of this
problem is given in section 4 (see also \cite{AC}).

The main purpose of this paper is to prove the absence of
surjective isometries from $L_{log}(\Omega, \mathcal A, \mu)$ onto
$L_{log}(\Omega, \mathcal A, \nu)$ in the case when measures $\mu$
and $\nu$ are equivalent (see section 4).

In the last fifth section, we consider the $F$-space
$$L^{(\nu)}_{\log}(\Omega, \mathcal A, \mu)=\left\{f \in L_{0}(\Omega,
\mathcal A, \mu)):
\int\limits_{\Omega}\log(1+\frac{d\nu}{d\mu}\cdot |f|)d\mu < +
\infty\right\}$$ for equivalent measures $\mu$ and $\nu$ with the
$F$-norm
$$\|f\|^{(\nu)}_{log,\mu}=\int\limits_{\Omega}\log(1+\frac{d\nu}{d\mu}\cdot
|f|)d\mu.$$ It is proved that this \ $F$-space \
$L^{(\nu)}_{\log}(\Omega, \mathcal A, \mu)$ \ is a subalgebra \ in
\ $ L_0(\Omega, \mathcal A, \mu$) \ if \ and \ only \ if \
$(\frac{d\nu}{d\mu})^{-1} \in L_{log}(\Omega, \mathcal A, \mu)$.

\section{Preliminaries}

Let $(\Omega, \mathcal A, \mu)$ be a be a complete measure space
with finite measure $\mu$, and let  $L_0(\Omega, \mathcal A, \mu)$
(respectively, $ L_{\infty}(\Omega, \mathcal A, \mu)$) be the
algebra  of equivalence classes of real valued measurable
functions (respectively, essentially bounded real valued
measurable functions)  on $(\Omega, \mathcal A, \mu)$. Denote by
$\nabla=\nabla_\mu$ the complete Boolean algebra of all
equivalence classes $e=[A],$ $A \in \mathcal A,$ of equal
$\mu$-almost everywhere sets from the $\sigma$-algebra $\mathcal
A$. It is known that $\widehat{\mu}(e) =
\widehat{\mu}([A])=\mu(A)$ is a strictly positive finite measure
on $\nabla_\mu.$ In what follows, we also denote the measure
$\widehat {\mu}$ by $\mu,$ and the algebra $L_0(\Omega, \mathcal
A, \mu)$ (respectively, $ L_{\infty}(\Omega, \mathcal A, \mu)$) by
$L_0(\nabla)=L_0(\nabla_\mu)$ (respectively, $
L_{\infty}(\nabla)=L_{\infty}(\nabla_\mu)).$

Following to \cite{DSZ}, consider in $L_0(\nabla_\mu)$ a
subalgebra
$$
L_{\log}(\nabla_\mu)=\{f \in L_0(\nabla_\mu): \int
\limits_{\Omega} \log(1+|f|)d\mu< + \infty\} $$ of
$log$-integrable measurable functions, and for each $f \in
L_{\log}(\nabla_\mu),$ set
$$
\|f\|_{\log}=\int_{\Omega} \log(1+|f|)d\mu.
$$
By \cite[Lemma 2.1]{DSZ}, a nonnegative function
$\|\cdot\|_{\log}: L_{\log}(\nabla_\mu) \rightarrow [0,\infty)$
is a $F$-norm, that is,

$(i)$. $\|f\|_{\log}>0$ for all $0 \neq f \in L_{\log}(\nabla_\mu);$

$(ii)$. $\|\alpha f\|_{\log}\le\|f\|_{\log}$  for all $f \in
L_{\log}(\nabla_\mu)$ and real number $\alpha$ with $|\alpha|\leq
1;$

$(iii)$. $\lim_{\alpha\to 0}\|\alpha f\|_{\log}=0$ for all $f \in
L_{\log}(\nabla_\mu);$

$(iv)$. $\|f+g\|_{\log}\leq\|f\|_{\log}+\|g\|_{\log}$ for all $f,
g \in L_{\log}(\nabla_\mu).$

In \cite{DSZ} it is shown that $L_{\log}(\nabla_\mu)$  is a
complete topological algebra with respect to the topology
generated by the metric  $\rho(f,g)=\|f-g\|_{\log}.$

Let $\mu$ and $\nu$ be two equivalent finite measures on the
measurable space $(\Omega, \mathcal A)$ (writing $\mu \sim \nu$).
Since $\mu(A) =0$ if and only if $\nu(A) =0, \ A \in \mathcal A$,
it follows that
$$
L_{0}(\nabla_\mu) = L_{0}(\nabla_\nu):=L_{0}(\nabla); \ \ L_{\infty}(\nabla_\mu) = L_{\infty}(\nabla_\nu) :=L_{\infty}(\nabla).
$$
Let $\frac{d\nu}{d\mu}$ be the Radon-Nikodym  derivative of
measure $\nu$ with respect to the measure $\mu$. It is well known
that $0 \leq \frac{d\nu}{d\mu} \in L_{0}(\nabla)$ and
$$
f \in  L_1(\Omega, \mathcal A, \nu) \ \Longleftrightarrow \ f\cdot
\frac{d\nu}{d\mu} \in L_1(\Omega, \mathcal A, \mu),
$$
in addition,
$$\int\limits_{\Omega}\ f \ d\nu =\int\limits_{\Omega}\ f \cdot(\frac{d\nu}{d\mu}) \
d\mu.
$$
Note that for $\mu \sim \nu,$ it follows that
$(\frac{d\nu}{d\mu})^{-1} = \frac{d\mu}{d\nu}.$

\section{Isometries of the $F$-spaces \ $L_{\log}(\nabla_\mu)$ \ and \ $L_{\log}(\nabla_\nu)$}

 In this section, a necessary and sufficient condition for the existence of isometries is established of $L_{log}(\Omega, \mathcal
A, \mu)$ onto $L_{log}(\Omega, \mathcal A, \nu).$

Let $\mu\sim \nu$, \ $h=\frac{d\nu}{d\mu}$, and let
$$L_p(\nabla_\mu)=\{f \in L_{0}(\nabla_\mu): \ \|f\|_{p}=
(\int\limits_{\Omega}|f|^p d\mu)^\frac{1}{p} <  \infty\};$$
$$L_p(\nabla_ \nu)=\{f \in L_{0}(\nabla_\nu): \ \|f\|_{p}=(\int\limits_{\Omega}|f|^p
d\nu)^\frac{1}{p} =(\int\limits_{\Omega}h\cdot|f|^p
d\mu)^\frac{1}{p}<  \infty\}.
$$
If $\mu\neq \nu$ then $\mu(\{h \neq \mathbf 1\} > 0$, where
$\mathbf1(\omega)= \mathbf 1$ ($\mu$-almost everywhere). In this
case the map $U: L_p(\nabla_ \mu) \to L_p(\nabla_ \nu)$, defined
by the following equality
$$U(f)=h^{-\frac{1}{p}}f, \ f\in L_p(\nabla_\mu),$$
is the non trivial surjective isometry from $L_p(\nabla_ \mu)$
onto $L_p(\nabla_\nu)$.

Bellow we show that this statement is not true for $F$-spaces
$L_{\log}(\nabla_\mu)$ and $L_{\log}(\nabla_\nu)$.

Note that
$$L_{\log}(\nabla_\nu)= \{f \in L_{0}(\nabla): \int\limits_{\Omega} \log(1+|f|) \ d\nu \ < + \infty \}=$$
$$=\{f \in L_{0}(\nabla): \int\limits_{\Omega} h\cdot\log((1+|f|) \ d\mu < + \infty\}=$$
$$=\{f \in L_{0}(\nabla): \int\limits_{\Omega}\log((1+|f|)^h) \ d\mu < + \infty\},$$
and
$$\|f\|_{log,\nu}= \int\limits_{\Omega} log(1+|f|) \ d\nu =\int\limits_{\Omega}\log((1+|f|)^h) \ d\mu. $$

Let $\nabla$ be a non-atomic complete Boolean algebra, that is, a
Boolean algebra $ \nabla$ has not atoms. Let $\nabla_e=\{g \in
\nabla:g \leq e\},$ where $ 0\neq e \in \nabla$. By
$\tau(\nabla_e)$ denote the minimal cardinality of a set that is
dense in $ \nabla_e$ with respect to the order topology
($(o)$-topology). The non-atomic complete Boolean algebra $
\nabla$ is said to be \emph{homogeneous} if
$\tau(\nabla_e)=\tau(\nabla_g)$ for any nonzero $ e, g \in
\nabla.$ The cardinality $\tau(\nabla)$ is called the
\emph{weight} of the homogeneous Boolean algebra $\nabla$ (see,
for example \cite[chapter VII]{V}).

\begin{teo}\label{t32}
Let $\nabla$ be a complete homogeneous Boolean algebra, $\mu$ and $\nu$ be
finite equivalent measures on $\nabla.$
$L_{log}(\nabla_\mu)$ is isometric to $L_{log}(\nabla_\nu)$ iff
 $\frac{\int\limits_\Omega h d\mu}{\mu(\Omega)}=1.$
\end{teo}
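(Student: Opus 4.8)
The plan is to prove the two directions separately, with the ``if'' direction being constructive and the ``only if'' direction being the main obstacle. For the sufficiency, suppose $m:=\tfrac{1}{\mu(\Omega)}\int_\Omega h\,d\mu=1$, i.e. $\nu(\Omega)=\mu(\Omega)$. Since $\nabla$ is complete and homogeneous, both $(\nabla,\mu)$ and $(\nabla,\nu)$ are, up to measure-preserving Boolean isomorphism, determined by the weight $\tau(\nabla)$ together with the total mass; see \cite[chapter VII]{V}. With equal total masses there is a Boolean automorphism $\Phi$ of $\nabla$ that carries $\mu$ to $\nu$, hence the induced $*$-isomorphism $U_\Phi:L_0(\nabla)\to L_0(\nabla)$ satisfies $\int_\Omega g\,d\nu=\int_\Omega U_\Phi(g)\,d\mu$ for all $g\ge 0$. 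Applying this with $g=\log(1+|f|)$ shows that $U_\Phi$ maps $L_{\log}(\nabla_\nu)$ onto $L_{\log}(\nabla_\mu)$ and that $\|U_\Phi^{-1}f\|_{\log,\nu}=\|f\|_{\log,\mu}$, so $U_\Phi^{-1}$ is the desired surjective isometry (linearity and bijectivity are automatic for a Boolean-induced map).

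For the necessity, assume there is a surjective isometry $T:L_{\log}(\nabla_\mu)\to L_{\log}(\nabla_\nu)$; the goal is to deduce $\nu(\Omega)=\mu(\Omega)$. First I would record a normalization invariant intrinsic to the metric: for a nonzero idempotent $e\in\nabla$ and scalar $t$, $\|t\,e\|_{\log,\mu}=\log(1+|t|)\,\mu(e)$, so as $t\to\infty$ the ``slope at infinity'' along the ray $\mathbb R e$ recovers $\mu(e)$; more usefully, $\|e\|_{\log,\mu}=\log 2\cdot\mu(e)$, so the total mass appears as $\mu(\Omega)=\|\mathbf 1\|_{\log,\mu}/\log 2$. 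Next I would use the known structure of isometries of these $F$-spaces (the papers cited as [5],[6] in the introduction): any such surjective isometry has the ``Lamperti form'' $T(f)=w\cdot (f\circ\sigma)$ for a Boolean isomorphism $\sigma:\nabla_\mu\to\nabla_\nu$ and a fixed weight function $w\in L_0(\nabla)$, where the isometry condition forces, for every $g\in L_{\log}$,
$$\int_\Omega \log\bigl(1+|w|\cdot|g\circ\sigma|\bigr)\,d\nu=\int_\Omega\log(1+|g|)\,d\mu.$$
Testing this on $g=t\,\chi_A$ and letting $t\to\infty$ forces $\nu(\{w\ne 0\}\cap\sigma(A))=\mu(A)$ on the one hand; testing on bounded $g$ and comparing the behaviour near $0$ pins down $|w|$ as (a power of) the Radon--Nikodym density of the push-forward measure. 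Carrying the two limiting regimes together yields that $w$ must be unimodular and $\sigma$ must be measure preserving from $\mu$ to $\nu$; in particular $\nu(\Omega)=\mu(\Omega)$, i.e. $\int_\Omega h\,d\mu=\mu(\Omega)$.

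The main obstacle is the necessity direction, and within it the step of pinning down the weight $w$ and showing $\sigma$ is measure preserving: the functional $\log(1+\cdot)$ is neither homogeneous nor a power function, so one cannot simply substitute scalars and divide as in the $L_p$ case. I expect the right tool is to exploit the two asymptotic regimes of $\log(1+x)$ — the linear behaviour $\log(1+x)\sim x$ as $x\to 0$, which extracts an $L_1$-type identity and hence identifies $|w|\,h_\sigma$ (where $h_\sigma$ is the density of $\sigma_*\mu$ w.r.t. $\nu$) with $1$ in an averaged sense, and the logarithmic growth as $x\to\infty$, which is insensitive to scaling and instead extracts the pure ``mass'' identity $\nu(\sigma(A))=\mu(A)$. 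Reconciling both on every measurable set, using homogeneity of $\nabla$ to guarantee $\sigma$ exists and is not obstructed by atomic parts, should close the argument; some care is needed to justify the limiting exchanges inside the integrals (monotone/dominated convergence in the parameter $t$), but these are routine once the structural form of $T$ is in hand.
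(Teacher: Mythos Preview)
Your sufficiency argument is exactly the paper's: from $\int_\Omega h\,d\mu=\mu(\Omega)$ one has $\nu(\Omega)=\mu(\Omega)$, invokes the Vladimirov classification of homogeneous measure algebras to obtain a measure-preserving Boolean automorphism $\alpha$, and checks that the induced algebra isomorphism $J_\alpha$ of $L_0(\nabla)$ is an $F$-norm isometry from $L_{\log}(\nabla_\mu)$ onto $L_{\log}(\nabla_\nu)$.

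For necessity the paper gives no argument at all: it simply cites Theorem~3.1 of \cite{AC}. Your sketch therefore goes beyond what the present paper does, and it is essentially sound, but you are working harder than needed. Once you quote the Lamperti-type structure $T(f)=w\cdot J_\sigma(f)$ from the cited papers, surjectivity forces $w\neq 0$ a.e., and then testing only on $f=t\mathbf 1$ already gives
\[
\int_\Omega \log(1+t|w|)\,d\nu=\log(1+t)\,\mu(\Omega).
\]
Dividing by $\log(1+t)$ and letting $t\to\infty$ (the integrand is dominated by $1+\log(1+|w|)/\log 2\in L_1(\nu)$ since $w=T(\mathbf 1)\in L_{\log}(\nabla_\nu)$) yields $\nu(\Omega)=\mu(\Omega)$ directly. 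The small-$t$ regime and the full determination of $|w|$ are unnecessary for the stated conclusion; they would matter only if you wanted the finer statement that $|w|=1$ and $\sigma$ is measure preserving, which is more than Theorem~\ref{t32} asserts.
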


\begin{proof}
Necessity is proved in [5], see Theorem 3.1. We prove sufficiency.
Let $\frac{\int \limits_{\Omega}h d\mu}{\mu(\Omega)}=1$. Then
using the equality $\int \limits_{\Omega}h d\mu=\int
\limits_{\Omega}h^{-1}h d\nu=\nu(\Omega),$ where
$h^{-1}=\frac{d\mu}{d\nu}$, we obtain $\nu(\Omega)=\mu(\Omega)$.
Hence, there is a measure-preserving automorphism $\alpha$ from
$\nabla_ \mu$ onto $\nabla_ \nu$, i.e. $\mu(e)=\nu(\alpha(e))$ for
any $e\in\nabla_ \mu$ ([6],VII.2. Theorem 5). Denote by $J_{\alpha}$ the isomorphism  of the algebra $L_0(\nabla_{\mu})=L_0(\nabla_{\nu})$ such that $J_{\alpha}(e) = \alpha(e)$ for all $e \in \nabla_{\mu}$. That's why for any $
f\in L_{log}(\nabla_\mu),$ we have from ([5], Proposition 3)
$$
\|f\|_{log,\mu}= \int\limits_{\Omega} log(1+|f(\omega)|)d\mu
=\int\limits_{\Omega} J_{\alpha}(log(1+|f(\omega)|))d\nu=
$$ $$
=\int\limits_{\Omega} (log(1+|J_{\alpha} f(\omega)|))d\nu=\|J_{\alpha}
f(\omega)\|_{log,\nu}.
$$
Hence, the  $ J_{\alpha}$  is a bijective linear isometry from
$L_{log}(\nabla_ \mu)$ onto $L_{log}(\nabla_ \nu)$.
\end{proof}

Let $h=\frac{d\nu}{d\mu}$ and $\Omega_>=\{\omega\in \Omega: h(\omega)>1\}, \
\Omega_<=\{\omega\in \Omega: h(\omega)<1\},
 \\[2mm]
 \Omega_==\{\omega\in \Omega: h(\omega)=1\}=\Omega\setminus(\Omega_>\cup\Omega_<). $
 Denote:
 \\[2mm]
 $S_>=S_>(\Omega,h)=$ $\frac{\int \limits_{\Omega_>}(h(x)-1)d\mu}{\mu(\Omega_>)}$ and\ $S_<=S_<(\Omega,h)=$ $\frac{\int \limits_{\Omega_<}(1-h(x))d\mu}{\mu(\Omega_<)}.$
The following theorem establishes 5 conditions equivalent to the isometricity of F-spaces.

\begin{teo}
Let $\nabla$  be a complete homogeneous algebra, $\mu$ and $\nu$
be finite equivalent measures on $\nabla,$
then the following conditions are equivalent\\
$(i).$ \ \  $ L_{log}(\nabla_\mu)$ and $ L_{log}(\nabla_\nu)$ are
isometric;\\[2mm]
$(ii).\ \  \frac{\int \limits_{\Omega}h(\omega)d\mu}{\mu(\Omega)}=1;$
\\[2mm]
$(iii). \ \  \frac{\int
\limits_{\Omega}h^{-1}(\omega)d\nu}{\nu(\Omega)}=1,$ where
$h^{-1}=\frac{d\mu}{d\nu};$ \\[2mm]
$(iv). \ \  \nu(\Omega)=\mu(\Omega);$ \\
$(v). \ \ S_>=S_<$;\\[2mm]
$(vi).$ \ \  there is a measure-preserving automorphism $\alpha$ from
$\nabla_ \mu$ onto $\nabla_ \nu.$

\end{teo}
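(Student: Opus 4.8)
The plan is to prove the theorem as a cycle of implications, leveraging Theorem \ref{t32} together with elementary computations involving the Radon--Nikodym derivative. Since Theorem \ref{t32} already gives $(i)\Leftrightarrow(ii)$, the bulk of the work is to show that $(ii)$, $(iii)$, $(iv)$, $(v)$ and $(vi)$ are all equivalent among themselves; then the chain closes through $(i)$.

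First I would establish $(ii)\Leftrightarrow(iv)$ and $(iii)\Leftrightarrow(iv)$. For these, the key identity is $\int_\Omega h\,d\mu = \nu(\Omega)$ (this is just the defining property of the Radon--Nikodym derivative applied to $f\equiv\mathbf 1$), and symmetrically $\int_\Omega h^{-1}\,d\nu = \mu(\Omega)$ using $h^{-1}=\frac{d\mu}{d\nu}$. Dividing by $\mu(\Omega)$ (resp.\ $\nu(\Omega)$) turns $(ii)$ into $\nu(\Omega)=\mu(\Omega)$ and $(iii)$ into $\mu(\Omega)=\nu(\Omega)$, so both are literally restatements of $(iv)$. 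Next, $(iv)\Leftrightarrow(vi)$: a measure-preserving automorphism can exist only if the total masses agree, giving $(vi)\Rightarrow(iv)$; conversely, when $\nabla$ is complete and homogeneous and $\mu(\Omega)=\nu(\Omega)$, the two measure algebras $\nabla_\mu$ and $\nabla_\nu$ are isomorphic as measure algebras (this is the classical Maharam-type / Vladimirov classification result already cited as ([6], VII.2, Theorem 5) in the proof of Theorem \ref{t32}), which produces the desired $\alpha$. Finally $(i)\Leftrightarrow(vi)$ is essentially the content of the sufficiency argument in Theorem \ref{t32}: the isomorphism $J_\alpha$ induced by a measure-preserving $\alpha$ is an isometry $L_{log}(\nabla_\mu)\to L_{log}(\nabla_\nu)$, and combined with $(i)\Leftrightarrow(ii)\Leftrightarrow(iv)$ this closes the loop.

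The remaining piece is $(v)$, and this is where the only genuine computation lies. Decompose $\Omega=\Omega_>\cup\Omega_<\cup\Omega_=$. On $\Omega_=$ we have $h-1=0$, so $\int_\Omega(h-1)\,d\mu = \int_{\Omega_>}(h-1)\,d\mu - \int_{\Omega_<}(1-h)\,d\mu = S_>\cdot\mu(\Omega_>) - S_<\cdot\mu(\Omega_<)$. On the other hand $\int_\Omega(h-1)\,d\mu = \nu(\Omega)-\mu(\Omega)$. So $(iv)$ (equivalently $(ii)$) is the statement $S_>\,\mu(\Omega_>) = S_<\,\mu(\Omega_<)$. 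To upgrade this to the cleaner $S_> = S_<$ of condition $(v)$, one needs $\mu(\Omega_>) = \mu(\Omega_<)$; this is exactly where homogeneity of $\nabla$ is used — on a homogeneous algebra one argues (or invokes from [5]) that an isometry forces the "excess" set $\Omega_>$ and the "deficit" set $\Omega_<$ to have equal measure, since otherwise the local structure of the metric on the two sides cannot match. I would present $(v)$ as equivalent to $(ii)$ by showing both directions: assuming $\mu(\Omega_>)=\mu(\Omega_<)$ (which I would either extract as a lemma from [5] or note is automatic under the homogeneity hypothesis in the regime where an isometry exists), the identity $S_>\,\mu(\Omega_>)=S_<\,\mu(\Omega_<)$ collapses to $S_>=S_<$.

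The main obstacle I anticipate is precisely the bookkeeping around $(v)$: one must be careful about the degenerate cases $\mu(\Omega_>)=0$ or $\mu(\Omega_<)=0$ (where $S_>$ or $S_<$ is an empty quotient and should be interpreted as $0$, which then forces the other to vanish too, forcing $h=\mathbf 1$ a.e.), and one must pin down exactly how homogeneity enters to guarantee $\mu(\Omega_>)=\mu(\Omega_<)$ rather than merely $S_>\mu(\Omega_>)=S_<\mu(\Omega_<)$. Everything else — the identities for $\int h\,d\mu$, the passage through $J_\alpha$, and the measure-algebra isomorphism — is either routine or already supplied by the cited results, so once $(v)$ is handled the proof is a short assembly of implications.
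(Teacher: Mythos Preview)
Your treatment of the implications among $(i)$, $(ii)$, $(iii)$, $(iv)$ and $(vi)$ matches the paper's: Theorem~\ref{t32} for $(i)\Leftrightarrow(ii)$, the Radon--Nikodym identities $\int_\Omega h\,d\mu=\nu(\Omega)$ and $\int_\Omega h^{-1}\,d\nu=\mu(\Omega)$ for the equivalences with $(iv)$, and the Vladimirov classification for $(iv)\Leftrightarrow(vi)$.

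The genuine gap is in your handling of $(v)$. You correctly derive that $(ii)$ amounts to $S_>\,\mu(\Omega_>)=S_<\,\mu(\Omega_<)$, and then propose to bridge to $S_>=S_<$ by arguing that homogeneity (or the existence of an isometry) forces $\mu(\Omega_>)=\mu(\Omega_<)$. That auxiliary claim is false. Take $\Omega=[0,1]$ with Lebesgue measure $\mu$, and set $h=2$ on $[0,\tfrac13]$ and $h=\tfrac12$ on $(\tfrac13,1]$. The Boolean algebra is homogeneous and $\int_0^1 h\,d\mu=1$, so $(ii)$ holds and the two $F$-spaces are isometric; yet $\mu(\Omega_>)=\tfrac13\ne\tfrac23=\mu(\Omega_<)$, and in fact $S_>=1$ while $S_<=\tfrac12$. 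Thus neither the equal-measure lemma you invoke nor the purported implication $(ii)\Rightarrow(v)$ survives this example, and your route through ``homogeneity balances the excess and deficit sets'' cannot work as stated. The paper does not take this detour: for $(ii)\Leftrightarrow(v)$ it writes, without further justification, the single chain
\[
\frac{\int_\Omega(h-1)\,d\mu}{\mu(\Omega)}
=\frac{\int_{\Omega_>}(h-1)\,d\mu}{\mu(\Omega_>)}
+\frac{\int_{\Omega_<}(h-1)\,d\mu}{\mu(\Omega_<)}
+\frac{\int_{\Omega_=}(h-1)\,d\mu}{\mu(\Omega_=)}
=S_>-S_<
\]
and reads off the equivalence directly from that identity. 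Your instinct that something subtle is going on at step $(v)$ is well founded; but the fix you propose is not the one the paper uses, and it is not correct.
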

\begin{proof}
$(i)\Longleftrightarrow(ii)$ follows from Theorem 3.1.
\\[2mm]
$(ii)\Longleftrightarrow(iii)$  $\frac{\int
\limits_{\Omega}h^{-1}(\omega)d\nu}{\nu(\Omega)}=\frac{\int
\limits_{\Omega}h(\omega)h^{-1}(\omega)d\mu}{{\int
\limits_{\Omega}d\nu}}=\frac{\mu(\Omega)}{{{\int
\limits_{\Omega}h(\omega)d\mu}}}=1.$ The reverse implication is proved
similarly.
\\[2mm]
$(iii)\Longleftrightarrow(iv)$ $\frac{\int
\limits_{\Omega}h^{-1}(\omega)d\nu}{\nu(\Omega)}=\frac{\int
\limits_{\Omega}h(\omega)h^{-1}(\omega)d\mu}{{\int
\limits_{\Omega}d\nu}}=\frac{\mu(\Omega)}{\nu(\Omega)}=1\Longleftrightarrow\mu(\Omega)=\nu(\Omega)$
\\[2mm]
$(iv)\Longleftrightarrow(vi)$ follows \cite[chapter VII, \S \ 2,
Theorems 5 and 6]{V})
\\[2mm]
$(ii)\Longleftrightarrow(v)$   $0=\frac{\int \limits_{\Omega}h(\omega)d\mu}{\mu(\Omega)}-1=\frac{\int \limits_{\Omega}h(\omega)d\mu}{\mu(\Omega)}-\frac{\int \limits_{\Omega}d\mu}{\mu(\Omega)}=\frac{\int \limits_{\Omega}(h(\omega)-1)d\mu}{\mu(\Omega)}=\frac{\int \limits_{\Omega_>}(h(\omega)-1)d\mu}{\mu(\Omega_>)}+\frac{\int \limits_{\Omega_<}(h(\omega)-1)d\mu}{\mu(\Omega_<)}+\frac{\int \limits_{\Omega_=}(h(\omega)-1)d\mu}{\mu(\Omega_=)}=S_>-S_<\Longleftrightarrow S_>=S_<$
\end{proof}

\section{The connection between isomorphisms and isometries of algebras $L_{\log}(\nabla_\mu)$ and $L_{\log}(\nabla_\nu)$}

Let $(\Omega, \mathcal A, \mu)$ and $(\Omega, A, \nu)$ be measurable spaces with
finite measures $\mu$. Let  $\nabla_{\mu}$ be a
complete Boolean algebra of all equivalence classes of equal
$\mu$-almost everywhere sets from the $\sigma$-algebra $\mathcal
A,$ and let $L_{\log}(\nabla_{\mu})$ be an algebra of
$log$-integrable measurable functions, corresponding measurable
space $(\Omega, \mathcal A, \mu_)$. Clear that $\nabla_{\mu}
\subset L_{\log}(\nabla_{\mu})$, and the restriction $\alpha =
\Phi |_{\nabla_{\mu}}$ of an isomorphism $\Phi:
L_0(\nabla_{\mu}) \to L_0(\nabla_{\mu})$ on a Boolean algebra
$\nabla_{\mu}$ is an isomorphism from a Boolean algebra
$\nabla_{\mu}$ onto a Boolean algebra $\nabla_{\mu}.$

The function $m(e)=\mu\circ\alpha(e)$, $e\in\nabla_\nu$ is the strictly  positive  finite measure  on  Boolean algebra  $\nabla_\nu$ and
$$L_{\log}(\nabla_\nu,\nu)=\Phi(L_{\log}(\nabla_\mu, \mu)) = L_{\log}(\nabla_m, m)$$ (see \cite[Proposition 3]{AC}).
Consequently, we get the following
Theorems (see \cite[Theorem 2]{AC}):
\begin{teo} $[7]$
$L_{\log}(M,\mu)=L_{\log}(M, \nu)$ if and only if $\frac{d\nu}{d\mu}, \frac{d\mu}{d\nu}. \in
L_\propto(\nabla_\mu)$
\end{teo}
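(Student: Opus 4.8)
The plan is to treat $L_{\log}$ purely as the sublevel set $\{f : \int\log(1+|f|)<\infty\}$ and to transfer everything onto the single measure $\mu$ via the Radon--Nikodym identity $\int_\Omega\log(1+|f|)\,d\nu=\int_\Omega h\,\log(1+|f|)\,d\mu$ with $h=\frac{d\nu}{d\mu}$, an identity already recorded in the excerpt. Both directions then reduce to comparing $\int\varphi\,d\mu$ with $\int h\varphi\,d\mu$ for the nonnegative function $\varphi=\log(1+|f|)$.

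For sufficiency I would simply observe that if $h\le C$ and $h^{-1}\le C$ $\mu$-a.e.\ (equivalently $C^{-1}\le h\le C$), then $\int h\varphi\,d\mu\le C\int\varphi\,d\mu$ and $\int\varphi\,d\mu=\int h^{-1}h\varphi\,d\mu\le C\int h\varphi\,d\mu$, so finiteness of one integral forces finiteness of the other. This yields both inclusions $L_{\log}(M,\mu)\subseteq L_{\log}(M,\nu)$ and $L_{\log}(M,\nu)\subseteq L_{\log}(M,\mu)$, hence equality.

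For necessity I would argue by contraposition together with a symmetry reduction: it suffices to show that $h\notin L_\infty(\nabla_\mu)$ implies $L_{\log}(M,\mu)\not\subseteq L_{\log}(M,\nu)$, since applying this with $\mu$ and $\nu$ interchanged (and $\frac{d\mu}{d\nu}=h^{-1}$) then forces $h^{-1}\in L_\infty(\nabla_\mu)$ as well, and the two inclusions together are exactly the hypothesis. So assume $h$ is not essentially bounded. Since $\nu(\Omega)=\int h\,d\mu<\infty$, $h$ is $\mu$-a.e.\ finite, so the sets $A_n=\{h\ge 2^n\}$ satisfy $\mu(A_n)>0$ for every $n$ while $\mu(A_n)\downarrow 0$; hence the disjoint annuli $B_n=A_n\setminus A_{n+1}$ have $\sum_n\mu(B_n)=\mu(A_1)>0$, so infinitely many of them are non-null. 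Choose a subsequence $n_1<n_2<\cdots$ with $\beta_k:=\mu(B_{n_k})>0$ (so $n_k\ge k$), noting $h\ge 2^{n_k}$ on $B_{n_k}$, and set $f=\sum_k c_k\,\chi_{B_{n_k}}\in L_0(\nabla)$, where $c_k>0$ is chosen so that $\log(1+c_k)=\frac{1}{k^2\beta_k}$. Then $\int_\Omega\log(1+|f|)\,d\mu=\sum_k\log(1+c_k)\beta_k=\sum_k k^{-2}<\infty$, so $f\in L_{\log}(M,\mu)$, whereas $\int_\Omega\log(1+|f|)\,d\nu=\int_\Omega h\log(1+|f|)\,d\mu\ge\sum_k 2^{n_k}\log(1+c_k)\beta_k=\sum_k 2^{n_k}k^{-2}\ge\sum_k 2^k k^{-2}=\infty$, so $f\notin L_{\log}(M,\nu)$, contradicting the inclusion.

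The main obstacle is precisely this necessity construction: one must be sure that the level sets of an unbounded $h$ can be cut into \emph{positively}-measured pieces on which $h$ is arbitrarily large, and then tune the coefficients $c_k$ so that the $\mu$-mass series $\sum\log(1+c_k)\beta_k$ converges while the $\nu$-mass series $\sum 2^{n_k}\log(1+c_k)\beta_k$ diverges --- the exponential growth of $2^{n_k}$ against a convergent factor such as $k^{-2}$ providing the slack. Everything else is a one-line estimate. I would also remark that, unlike the isometry statements of Section~3, no homogeneity or non-atomicity hypothesis on $\nabla$ is needed here.
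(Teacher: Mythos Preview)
The paper does not actually prove this statement: it is quoted from reference~[7] (Abdullaev--Chilin), as the bracketed citation in the theorem header and the lead-in sentence ``Consequently, we get the following Theorems (see \cite[Theorem 2]{AC})'' make clear. So there is no in-paper argument to compare against.

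Your proof is correct and self-contained. Sufficiency is the obvious two-sided domination $C^{-1}\le h\le C$, and for necessity your annulus construction on $B_{n_k}\subset\{2^{n_k}\le h<2^{n_k+1}\}$ with coefficients calibrated by $\log(1+c_k)=1/(k^2\beta_k)$ does exactly what is needed: the $\mu$-series is $\sum k^{-2}<\infty$ while the $\nu$-series dominates $\sum 2^{n_k}k^{-2}\ge\sum 2^k k^{-2}=\infty$. The small checks you flag (that $\mu(A_n)>0$ for all $n$ by essential unboundedness, that $\mu(A_n)\downarrow 0$ since $\int h\,d\mu<\infty$ forces $h<\infty$ a.e., and that $n_k\ge k$) are all in order. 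Your closing remark that no homogeneity or non-atomicity of $\nabla$ is required is also correct and worth keeping, since the paper's isometry results in Section~3 do impose such hypotheses.
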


\begin{teo}\label{t41}
$[7]$The algebras $ L_{\log}(\nabla_\mu, \mu)$ and $L_{\log}(\nabla_\nu,
\nu)$ are $\ast$-iso\-mor\-phic if and only if
$\frac{dm}{d\nu} \in  L_{\infty}(\nabla_\nu)$   and
$\frac{d\nu}{dm} \in L_{\infty}(\nabla_\nu)$.
\end{teo}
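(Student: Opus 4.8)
The plan is to reduce the $*$-isomorphism question for the $F$-algebras $L_{\log}(\nabla_\mu,\mu)$ and $L_{\log}(\nabla_\nu,\nu)$ to the equality-of-algebras question already settled in Theorem 4.1. The key observation, borrowed from \cite[Proposition 3]{AC} and recorded just before the statement, is that any $*$-isomorphism $\Phi$ between the two $F$-algebras, when restricted to the Boolean algebras of projections, gives a Boolean isomorphism $\alpha=\Phi|_{\nabla_\nu}$ from $\nabla_\nu$ onto $\nabla_\mu$, and transporting $\mu$ through $\alpha$ produces the measure $m(e)=\mu(\alpha(e))$ on $\nabla_\nu$ with $\Phi\bigl(L_{\log}(\nabla_\mu,\mu)\bigr)=L_{\log}(\nabla_\nu,m)$. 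So, after composing $\Phi$ with the canonical isomorphism $J_\alpha$ implementing $\alpha$, one is left comparing two $F$-algebras built on the \emph{same} Boolean algebra $\nabla_\nu$, namely $L_{\log}(\nabla_\nu,m)$ and $L_{\log}(\nabla_\nu,\nu)$.

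First I would prove necessity. Suppose $\Phi: L_{\log}(\nabla_\mu,\mu)\to L_{\log}(\nabla_\nu,\nu)$ is a $*$-isomorphism. Extract $\alpha$ and $m$ as above, so that $L_{\log}(\nabla_\nu,m)=\Phi(L_{\log}(\nabla_\mu,\mu))=L_{\log}(\nabla_\nu,\nu)$ as sets of functions in $L_0(\nabla_\nu)$. Now apply Theorem 4.1 (the result that $L_{\log}(\nabla,\mu_1)=L_{\log}(\nabla,\mu_2)$ iff the mutual Radon--Nikodym derivatives are bounded) with $\mu_1=m$ and $\mu_2=\nu$: this immediately yields $\frac{dm}{d\nu}\in L_\infty(\nabla_\nu)$ and $\frac{d\nu}{dm}\in L_\infty(\nabla_\nu)$, which is exactly the asserted condition.

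Conversely, for sufficiency, assume $\frac{dm}{d\nu},\frac{d\nu}{dm}\in L_\infty(\nabla_\nu)$ — here I must first fix the measure $m$; the natural choice is to start from any measure-preserving Boolean isomorphism or, more precisely, to read the hypothesis as: there exists a Boolean isomorphism $\alpha:\nabla_\nu\to\nabla_\mu$ such that the induced $m=\mu\circ\alpha$ has bounded mutual derivatives against $\nu$. Then by Theorem 4.1 we get $L_{\log}(\nabla_\nu,m)=L_{\log}(\nabla_\nu,\nu)$ as algebras, the identity map being a $*$-isomorphism between them. Composing the canonical $*$-isomorphism $J_{\alpha^{-1}}: L_0(\nabla_\mu)\to L_0(\nabla_\nu)$ (which, by \cite[Proposition 3]{AC}, maps $L_{\log}(\nabla_\mu,\mu)$ onto $L_{\log}(\nabla_\nu,m)$) with this identity produces a $*$-isomorphism from $L_{\log}(\nabla_\mu,\mu)$ onto $L_{\log}(\nabla_\nu,\nu)$, as required.

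The main obstacle I anticipate is bookkeeping around the intermediate measure $m$ and making the statement precise: the condition in the theorem refers to $\frac{dm}{d\nu}$, yet $m$ itself is only defined once an isomorphism $\alpha$ is chosen, so one must be careful to phrase necessity as ``for the $m$ arising from $\Phi$'' and sufficiency as ``for some admissible $m$.'' The other point requiring care is checking that $J_\alpha$ genuinely preserves the $F$-algebra structure — that it is continuous and multiplicative on $L_{\log}$, not merely on $\nabla$ — but this is precisely the content of \cite[Proposition 3]{AC}, which I am entitled to invoke, so the remaining work is organizational rather than substantive.
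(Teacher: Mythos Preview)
Your proposal is correct and follows essentially the same route as the paper: the paper does not give a standalone proof of this theorem but instead records, in the paragraph immediately preceding it, the reduction you describe --- that any $\ast$-isomorphism $\Phi$ restricts to a Boolean isomorphism $\alpha$, that the transported measure $m=\mu\circ\alpha$ satisfies $\Phi(L_{\log}(\nabla_\mu,\mu))=L_{\log}(\nabla_m,m)$ via \cite[Proposition~3]{AC}, and that the conclusion then follows from the equality criterion of Theorem~4.1 --- and then simply cites \cite[Theorem~2]{AC}. Your write-up spells out both directions of this reduction explicitly, and your closing remark about the dependence of $m$ on the choice of $\alpha$ is a fair observation about the statement's phrasing, not a defect in your argument.
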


Let $\mu$ be a strictly positive finite measure on $ \nabla$. In
this case $\nabla$ is  a  direct product of homogeneous Boolean
algebras $ \nabla_{e_n},$ $e_n\cdot e_m = 0,$ $n\neq m,$
$\tau_n=\tau (\nabla_{e_n})< \tau_{n+1},$ $n,m \in \mathbb N$
\cite[chapter VII, \S 2, Theorem 5]{V}), where $\mathbb N$ is a
set of natural numbers. Let \ \ $\{\mu_1, \mu_2, \mu_3,...\}$ sequence  values of measures on units of homogeneous components on Boolean
algebra $\nabla,$  i. e. $\mu(e_n)=\mu_n, \ \ \forall n\in N.$

Let $\nabla$ be a complete Boolean algebra, $\mu$ and $\nu$ be
finite equivalent measures on $\nabla.$ Let $\nabla_i$ be
homogeneous components $\nabla,$ and let $ \Omega_i$ be a measurable set corresponding $e_i$, $h_i (x)=h(x)\mathfrak{X}_i$
where $\mathfrak{X}_i$ is the characteristic function of the set
$\Omega_i.$
The following theorem generalizes Theorem 3.1 for homogeneous Boolean algebras.
\begin{teo}\label{t42}
If $\mu$ and $\nu$ are
finite equivalent measures on $\nabla,$ then
$L_{log}(\nabla_\mu)$ is isometric to $L_{log}(\nabla_\nu)$ iff

 $\frac{\int
\limits_{\Omega_i}h(x)d\mu}{\mu(\Omega_i)}= 1$ for some $i\in
N.$
\end{teo}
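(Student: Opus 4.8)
\emph{Plan.} The decomposition $\nabla=\prod_{n}\nabla_{e_n}$ into homogeneous components with strictly increasing weights $\tau_n=\tau(\nabla_{e_n})$ is intrinsic to $\nabla$, and since $\mu\sim\nu$ gives $\nabla_\mu=\nabla_\nu=\nabla$, the \emph{same} partition $\{e_i\}$ serves both measures. Hence both $F$-spaces split along this common partition,
$$
L_{\log}(\nabla_\mu)=\bigoplus_{i}L_{\log}(\nabla_{e_i},\mu),\qquad L_{\log}(\nabla_\nu)=\bigoplus_{i}L_{\log}(\nabla_{e_i},\nu),
$$
and for $f=\sum_i f_i$ with $f_i$ supported on $\Omega_i$ the $F$-norm is additive over the orthogonal pieces, $\|f\|_{\log}=\sum_i\int_{\Omega_i}\log(1+|f_i|)\,d\mu$. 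The first thing I would record is the elementary translation $\frac{\int_{\Omega_i}h\,d\mu}{\mu(\Omega_i)}=1\iff\nu(\Omega_i)=\mu(\Omega_i)$, which, applying Theorem 3.1 to the homogeneous algebra $\nabla_{e_i}$, is exactly the assertion that the $i$-th summands $L_{\log}(\nabla_{e_i},\mu)$ and $L_{\log}(\nabla_{e_i},\nu)$ are isometric.

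For the implication ``isometric $\Rightarrow$ condition'' I would show that any surjective isometry $T$ must respect the homogeneous decomposition. This I would extract from the description of isometries of $L_{\log}$ in [5],[6]: such a $T$ is disjointness (band) preserving, so it carries each homogeneous band of $\nabla$ onto a band of the same weight; since the weights $\tau_i$ are pairwise distinct, $T$ must send the $i$-th summand onto the $i$-th summand. Restricting $T$ to each summand gives a surjective isometry between the homogeneous pieces, whence Theorem 3.1 forces $\nu(\Omega_i)=\mu(\Omega_i)$ for every $i$; in particular the stated condition holds for some $i$. The main obstacle in this direction is the band-preservation claim, i.e.\ verifying that the $F$-metric of $L_{\log}$ detects the homogeneous type of a component; I would obtain it from the weighted-composition form of isometries established in [5],[6] (a Banach--Lamperti--Yeadon analogue for $L_{\log}$).

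For the converse I would try to build a global isometry from componentwise data: for each index $i$ with $\nu(\Omega_i)=\mu(\Omega_i)$, homogeneity of $\nabla_{e_i}$ furnishes a measure-preserving automorphism $\alpha_i$, whose induced map $J_{\alpha_i}$ is an isometry by the computation in the proof of Theorem 3.1, and the direct sum $\bigoplus_i J_{\alpha_i}$ is a surjective isometry by the additivity of $\|\cdot\|_{\log}$ over the orthogonal pieces. This is precisely where I expect the principal difficulty, and it lies in the gap between ``some'' and the data the construction consumes: to assemble $\bigoplus_i J_{\alpha_i}$ one needs $\nu(\Omega_i)=\mu(\Omega_i)$ for \emph{every} $i$, whereas the hypothesis as written supplies it at a single index only, yielding an isometry on that one band with no mechanism to extend across the mismatched bands (indeed, if some other $\Omega_j$ has $\nu(\Omega_j)\neq\mu(\Omega_j)$, the forward direction already obstructs a global isometry). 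I would therefore flag that the converse requires the matching condition for every $i$ -- consistent with the fact that the forward argument itself produces it for every $i$ -- and I expect the equivalence to hold with ``for some $i$'' strengthened to ``for every $i$.''
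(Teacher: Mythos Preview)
Your approach is essentially the same as the paper's: the paper too invokes the fact that a surjective isometry sends homogeneous components to homogeneous components (it cites Corollary~3.2 of [8] rather than the weighted-composition form from [5],[6]), then applies Theorem~3.1 on each $\nabla_{e_i}$ for the forward direction, and for the converse translates the hypothesis to $\nu(\Omega_i)=\mu(\Omega_i)$, appeals to Vladimirov's classification to get a global measure-preserving automorphism $\alpha$, and checks that $J_\alpha$ is an isometry. Your diagnosis of the ``some'' versus ``every'' gap is on target: the paper's own converse silently passes from $\nu(\Omega_i)=\mu(\Omega_i)$ at a single $i$ to a global measure-preserving automorphism of $\nabla$, which requires the equality for \emph{all} $i$; the subsequent Theorems~4.4--4.6 in the paper state the condition for all $i$ (respectively all $n$), confirming that ``for some $i$'' in the statement is a slip.
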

\begin{proof}
Let $U$ be an isometry from $L_{log}(\nabla_\mu)$ onto
$L_{log}(\nabla_\nu)$. Then by virtue of Corollary 3.2 [8]
$U/\nabla_\mu$ transfers homogeneous components  $\nabla_\mu$ into
homogeneous ones. Then, by virtue of Theorems 3.1, the equality
$\frac{\int \limits_{\Omega_i } h(x) d\mu}{\mu(\Omega_i)}=1$
holds. Conversely, let $\frac{\int \limits_{\Omega_i } h(x)
d\mu}{\mu(\Omega_i)}=1.$ Then using the equality $\int
\limits_{\Omega_i}h(x)d\mu=\int
\limits_{\Omega_i}h^{-1}(x)h(x)d\nu=\nu(\Omega_i),$ where
$h^{-1}=\frac{d\mu}{d\nu}$, we obtain
$\nu(\Omega_i)=\mu(\Omega_i)$. Hence, there is a
measure-preserving automorphism $\alpha$ from $\nabla_ \mu$ onto
$\nabla_ \nu$, i.e. $\mu(x)=\nu(\alpha(x))$ for any $x\in\nabla_
\mu$ ([6],VII.2. Theorem 5). Let $J_{\alpha}$ be the isomorphism  of the algebra $L_0(\nabla_{\mu})=L_0(\nabla_{\nu})$ such that $J_{\alpha}(e) = \alpha(e)$ for all $e \in \nabla_{\mu}$.
. That's why for any $ f\in
L_{log}(\nabla_\mu),$ we have from ([5], Proposition 3)
$$
\|f\|_{log,\mu}= \int\limits_{\Omega} log(1+|f(x)|)d\mu
=\int\limits_{\Omega} J_{\alpha}(log(1+|f(x)|))d\nu=
$$ $$
=\int\limits_{\Omega} (log(1+|J_{\alpha} f(x)|))d\nu=\|J_{\alpha}
f(x)\|_{log,\nu}.
$$
Hence, the  $ J_{\alpha}$  is a bijective linear isometry from
$L_{log}(\nabla_ \mu)$ onto $L_{log}(\nabla_ \nu)$.
\end{proof}

Let $\{\mu_i\}$ and $\{\nu_i\}$ be sequences of the values of the
measures $\mu$ and $\nu,$ respectively on $\Omega_i,$ moreover
$\nabla_\mu=\nabla_\nu.$ We need the following theorem which gives
a classification of Boolean algebras with probability measures
\cite[chapter VII, \S \ 2, Theorems 5 and 6]{V}),\ \ for the case
when $\nabla_\mu=\nabla_\nu$.

\begin{teo} \cite[chapter VII, \S \ 2,
Theorems 5 and 6]{V}) The coincidence of sequences $\{\mu_i\}$ and
$\{\nu_i\}$ is necessary and sufficient for the existence of a
measure-preserving isomorphism from $\nabla_\mu$ onto
$\nabla_\nu.$
\end{teo}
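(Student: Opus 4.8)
The plan is to reduce everything to the structure and uniqueness theorems for homogeneous measure algebras that are quoted here from \cite[chapter VII, \S\,2]{V} (Maharam's theorem); the only genuinely new point is a rigidity observation forced by the strict monotonicity $\tau_n<\tau_{n+1}$ of the weights. First I would record that, since $\mu\sim\nu$, the Boolean algebras coincide, $\nabla_\mu=\nabla_\nu=\nabla$, and that the decomposition $\nabla=\bigoplus_n\nabla_{e_n}$ into homogeneous components with pairwise distinct weights $\tau_n=\tau(\nabla_{e_n})$ is intrinsic to the Boolean algebra alone: the weight $\tau(\nabla_e)$ is the $(o)$-density character of $\nabla_e$ and makes no reference to a measure, so the idempotents $e_n$ are the same whether $\nabla$ carries $\mu$ or $\nu$. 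In particular $\mu_i=\mu(e_i)$ and $\nu_i=\nu(e_i)$ are the masses of \emph{one and the same} element $e_i$ under the two measures, and one shows (using that a nonzero piece of a homogeneous component has the weight of that component) that $e_m$ is the unique maximal element $e$ with $\nabla_e$ homogeneous of weight $\tau_m$.

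For necessity I would take an arbitrary measure-preserving isomorphism $\varphi$ from $\nabla_\mu$ onto $\nabla_\nu$, i.e. a Boolean automorphism of $\nabla$ with $\nu(\varphi(x))=\mu(x)$ for all $x\in\nabla$. Restricted to a principal ideal, $\varphi$ is an order isomorphism of $\nabla_{e_n}$ onto $\nabla_{\varphi(e_n)}$, hence preserves weights: $\tau(\nabla_{\varphi(e_n)})=\tau_n$. By the uniqueness of the weight-$\tau_n$ component noted above, this forces $\varphi(e_n)=e_n$ for every $n$, and therefore $\nu_n=\nu(\varphi(e_n))=\mu(e_n)=\mu_n$. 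Thus the sequences $\{\mu_i\}$ and $\{\nu_i\}$ coincide.

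For sufficiency I would assume $\mu_i=\nu_i$ for all $i$ and work component by component. For fixed $i$ the measure algebras $(\nabla_{e_i},\mu)$ and $(\nabla_{e_i},\nu)$ are both homogeneous of the same weight $\tau_i$ and carry equal total mass $\mu_i=\nu_i$, so by the classification of homogeneous measure algebras (\cite[chapter VII, \S\,2, Theorems 5 and 6]{V}) there is a measure-preserving Boolean isomorphism $\varphi_i\colon(\nabla_{e_i},\mu)\to(\nabla_{e_i},\nu)$. Gluing, put $\varphi(x)=\sup_i\varphi_i(x\wedge e_i)$; its inverse is $y\mapsto\sup_i\varphi_i^{-1}(y\wedge e_i)$, so $\varphi$ is an automorphism of $\nabla$, and by $\sigma$-additivity
$$\nu(\varphi(x))=\sum_i\nu\big(\varphi_i(x\wedge e_i)\big)=\sum_i\mu(x\wedge e_i)=\mu(x),$$
so $\varphi$ is the desired measure-preserving isomorphism from $\nabla_\mu$ onto $\nabla_\nu$.

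The main obstacle is entirely absorbed into the cited Maharam-type theorems: the existence of the countable homogeneous decomposition and, above all, the uniqueness statement that a homogeneous measure algebra is determined up to isomorphism by its weight together with its total mass. Granting that, the argument is bookkeeping; the only points deserving a little care are the uniform treatment of possibly atomic components $e_i$ (where the notion of weight degenerates) and the verification that the glued map $\varphi$ is a bona fide automorphism of the complete Boolean algebra, both of which are routine once the homogeneous case is available.
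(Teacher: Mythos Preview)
Your argument is correct, but there is nothing to compare it against: the paper does not prove this theorem at all. It is stated as a quotation of \cite[chapter VII, \S\,2, Theorems 5 and 6]{V} and used as a black box, with no proof or sketch supplied.

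What you have written is precisely the standard proof one extracts from the Maharam structure theory in the cited reference: the homogeneous decomposition $\nabla=\bigoplus_n\nabla_{e_n}$ with strictly increasing weights $\tau_n$ is intrinsic to the Boolean algebra, so any automorphism must permute the $e_n$; distinctness of the $\tau_n$ forces $\varphi(e_n)=e_n$, giving necessity; and the classification of homogeneous measure algebras by weight and total mass yields a measure-preserving isomorphism on each component, which one glues by $\sigma$-additivity for sufficiency. Your caveat about the atomic part is appropriate in general, though in the paper's setting the decomposition is taken to be into non-atomic homogeneous pieces, so that issue does not arise here.
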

We obtain from Theorem 4.3 the following
\begin{teo}   The $F$-spaces $L_{log}(\nabla_\mu)$ and  $L_{\log}(\nabla_\nu)$ are isometric if and only if \ $\mu_n  =\nu_n $  for all $n=1,2\dots$
\end{teo}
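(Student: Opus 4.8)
The plan is to deduce the statement from Theorem~\ref{t42} and the classification theorem quoted just above, after translating the averaged Radon--Nikodym condition on each homogeneous component into an equality of numbers. Recall that $\nabla=\nabla_\mu=\nabla_\nu$ decomposes canonically as a product of its homogeneous components $\nabla_{e_n}$ with strictly increasing (hence pairwise distinct) weights $\tau_n$, and that $\mu(\Omega_n)=\mu_n$, $\nu(\Omega_n)=\nu_n$. The translation I would use is the elementary identity
$$\int\limits_{\Omega_n} h\, d\mu = \int\limits_{\Omega_n} d\nu = \nu(\Omega_n) = \nu_n,$$
so that $\bigl(\int_{\Omega_n} h\, d\mu\bigr)/\mu(\Omega_n)=\nu_n/\mu_n$, which equals $1$ exactly when $\mu_n=\nu_n$.

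For the implication ($\Rightarrow$) I would argue as follows. Suppose $U$ is a surjective isometry from $L_{log}(\nabla_\mu)$ onto $L_{log}(\nabla_\nu)$. As in the proof of Theorem~\ref{t42} (via Corollary~3.2 of [8]), $U$ respects the homogeneous decomposition of $\nabla$; since the weights $\tau_n$ are pairwise distinct and this decomposition is unique, $U$ must carry the component $\nabla_{e_n}$ onto $\nabla_{e_n}$ itself. Restricting $U$ then gives a surjective isometry between $L_{log}\bigl((\nabla_{e_n})_\mu\bigr)$ and $L_{log}\bigl((\nabla_{e_n})_\nu\bigr)$, so Theorem~\ref{t32} applied to the homogeneous algebra $\nabla_{e_n}$ with the restricted measures yields $\bigl(\int_{\Omega_n} h\, d\mu\bigr)/\mu(\Omega_n)=1$, i.e. $\mu_n=\nu_n$. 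As $n$ is arbitrary, this gives $\mu_n=\nu_n$ for all $n$.

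For the converse ($\Leftarrow$) I would use the classification theorem stated above. Assume $\mu_n=\nu_n$ for every $n$, i.e. the value sequences $\{\mu_n\}$ and $\{\nu_n\}$ coincide; then there is a measure-preserving isomorphism $\alpha\colon\nabla_\mu\to\nabla_\nu$. Let $J_\alpha$ be the induced $\ast$-isomorphism of $L_0(\nabla)$ with $J_\alpha(e)=\alpha(e)$ for $e\in\nabla$. Since $\alpha$ preserves the measure one has $\int_\Omega J_\alpha(g)\,d\nu=\int_\Omega g\,d\mu$ for every $0\le g\in L_0(\nabla)$ ([5], Proposition~3), and $J_\alpha$ commutes with the functional calculus, so $J_\alpha\bigl(\log(1+|f|)\bigr)=\log\bigl(1+|J_\alpha f|\bigr)$. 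Exactly as in the proof of Theorem~\ref{t32} this gives
$$\|f\|_{log,\mu}=\int\limits_\Omega\log(1+|f|)\,d\mu=\int\limits_\Omega\log\bigl(1+|J_\alpha f|\bigr)\,d\nu=\|J_\alpha f\|_{log,\nu}$$
for every $f\in L_{log}(\nabla_\mu)$; applying the same identity to $\alpha^{-1}$ shows that $J_\alpha$ restricts to a bijective linear isometry of $L_{log}(\nabla_\mu)$ onto $L_{log}(\nabla_\nu)$.

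The only ingredient that is not routine bookkeeping is the claim, used in ($\Rightarrow$), that an $F$-space isometry respects the homogeneous decomposition of $\nabla$ and preserves the index (equivalently, the weight) of each component; I would import this from Corollary~3.2 of [8], the same fact already used in the proof of Theorem~\ref{t42}. Granting it, the remainder is just the Radon--Nikodym arithmetic above together with the invocations of Theorem~\ref{t32} and the classification theorem, so I expect no further obstacle.
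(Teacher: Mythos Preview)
Your proof is correct and follows essentially the same route as the paper: the paper simply records that the statement follows from Theorem~\ref{t42}, since $\int_{\Omega_n} h\,d\mu=\nu(\Omega_n)=\nu_n$ makes the averaged Radon--Nikodym condition on each homogeneous component equivalent to $\mu_n=\nu_n$. Your argument just unpacks this derivation in more detail (and for the converse invokes the classification theorem directly rather than through Theorem~\ref{t42}), but the underlying content is identical.
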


\begin{teo}
Let $\nabla$  be a complete algebra, $\mu$ and $\nu$
be finite equivalent measures on $\nabla,$
then the following conditions are equivalent\\
$(i).$ \ \  $ L_{log}(\nabla_\mu)$ and $ L_{log}(\nabla_\nu)$ are
isometric;\\[2mm]
$(ii).\ \  \frac{\int \limits_{\Omega_i}h(\omega)d\mu}{\mu(\Omega_i)}=1;$
\\[2mm]
$(iii). \ \  \frac{\int
\limits_{\Omega_i}h^{-1}(\omega)d\nu}{\nu(\Omega_i)}=1,$ where
$h^{-1}=\frac{d\mu}{d\nu};$ \\[2mm]
$(iv). \ \  \nu(\Omega_i)=\mu(\Omega_i);$ \\
$(v).$ \ \  there is a measure-preserving automorphism $\alpha$ from
$\nabla_ \mu$ onto $\nabla_ \nu.$
$(vi). \ \ S_>(\Omega_i,h)=S_<(\Omega_i,h)$;

\end{teo}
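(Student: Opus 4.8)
The plan is to prove the theorem as a short web of implications, reducing each link either to a result already established or to elementary Radon--Nikodym bookkeeping carried out over the decomposition of $\nabla$ into its homogeneous components $\nabla_i$ with units $e_i$ and associated measurable sets $\Omega_i$. The equivalence $(i)\Longleftrightarrow(ii)$ is exactly Theorem~\ref{t42}: a surjective isometry between the two $F$-spaces restricts to a Boolean isomorphism of $\nabla$ that carries homogeneous components to homogeneous components, and on each such component the criterion of Theorem~\ref{t32} gives $\int_{\Omega_i}h\,d\mu=\mu(\Omega_i)$, while conversely this mass condition produces a measure-preserving automorphism and hence an isometry as in the proof of Theorem~\ref{t42}; one may equally read off $(i)\Longleftrightarrow(ii)$ by combining Theorem~4.5 with the reformulation of $(ii)$ as $(iv)$ given below. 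So the substance of the statement is the mutual equivalence of $(ii)$--$(vi)$ on the relevant component, which we treat exactly as the corresponding part of Theorem~3.2, now localized to $\Omega_i$ in place of $\Omega$.

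For $(ii)\Longleftrightarrow(iii)\Longleftrightarrow(iv)$ I would use the two identities $\nu(\Omega_i)=\int_{\Omega_i}h\,d\mu$ and $\int_{\Omega_i}h^{-1}\,d\nu=\int_{\Omega_i}h^{-1}h\,d\mu=\mu(\Omega_i)$, valid because $h^{-1}=\tfrac{d\mu}{d\nu}$ when $\mu\sim\nu$. The first shows that $(ii)$ says precisely $\mu(\Omega_i)=\nu(\Omega_i)$, i.e.\ $(iv)$; the second rewrites the numerator in $(iii)$ as $\mu(\Omega_i)$, so $(iii)$ reads $\mu(\Omega_i)/\nu(\Omega_i)=1$, again $(iv)$. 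The equivalence $(iv)\Longleftrightarrow(v)$ is the one genuinely structural point: since $\mu\sim\nu$ one has $\nabla_\mu=\nabla_\nu$ as Boolean algebras, and by the Vladimirov classification of homogeneous measure algebras (Theorem~4.4, i.e.\ \cite[chapter VII, \S\,2, Theorems 5 and 6]{V}) a measure-preserving isomorphism of $(\nabla_i)_\mu$ onto $(\nabla_i)_\nu$ exists iff the two measures assign the unit $e_i$ equal mass; gluing these component maps yields a measure-preserving automorphism $\alpha$ of $\nabla$, while conversely any measure-preserving automorphism of $\nabla$ must respect the homogeneous decomposition (each component being determined by its weight together with its mass), which forces $\mu(\Omega_i)=\nu(\Omega_i)$.

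Finally $(ii)\Longleftrightarrow(vi)$ is the same computation as in Theorem~3.2: split $\Omega_i=\Omega_{i,>}\cup\Omega_{i,<}\cup\Omega_{i,=}$ according to whether $h>1$, $h<1$ or $h=1$ on that part, note that $(ii)$ is equivalent to $\int_{\Omega_i}(h-1)\,d\mu=0$, and rearrange this, after separating the three pieces and normalizing by the corresponding measures, into $S_>(\Omega_i,h)=S_<(\Omega_i,h)$. The main obstacle is really only the step $(iv)\Longleftrightarrow(v)$, since it is the one invoking the structure theory of measure algebras rather than a one-line identity; one must also be careful to match the quantifier on the index $i$ to that used in Theorem~\ref{t42} (and in Theorem~4.5) and to justify that a measure-preserving automorphism of a non-homogeneous $\nabla$ necessarily acts componentwise. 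Everything else is routine bookkeeping already rehearsed in Sections~3 and~4.
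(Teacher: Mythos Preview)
Your proposal is correct and follows exactly the route the paper intends: Theorem~4.6 is stated in the paper without a separate proof, being the componentwise version of Theorem~3.2 combined with Theorem~\ref{t42} and the Vladimirov classification (Theorem~4.4), and your web of implications $(i)\Leftrightarrow(ii)$ via Theorem~\ref{t42}, $(ii)\Leftrightarrow(iii)\Leftrightarrow(iv)$ via the Radon--Nikodym identities, $(iv)\Leftrightarrow(v)$ via \cite[VII, \S2, Theorems~5--6]{V}, and $(ii)\Leftrightarrow(vi)$ by the computation of Theorem~3.2 localized to $\Omega_i$, is precisely that. Your caveats about the quantifier on $i$ and the componentwise action of a measure-preserving automorphism are well placed and match the only points where care is needed.
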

Let $\nabla$ be a complete homogeneous Boolean algebra, $\mu$ and $\nu$ are equivalent measures on $\nabla$, $h=\frac{d\nu}{d\mu}.$  We introduce the following notation

- by $C_{1}(h)$ we denote the fulfillment of the conditions $\frac{\int \limits_{\Omega_i} h(x) d\mu}{\mu(\Omega_i)}=1.$

-by $C_{2}(h)$ we denote the fulfillment of the conditions $h=\frac{d\nu}{d\mu}$ and $h^{-1}=\frac{d\mu}{d\nu}$ are bounded.

Consider the equivalent measures $\mu$ and $\nu$ on the Boolean
algebra $\nabla_\mu=\nabla_\nu.$ Let $\alpha$ be an automorphism
from $\nabla_\mu$ onto $\nabla_{\mu\cdot\alpha^{-1}}=\nabla_\nu,$
where ${\mu\cdot\alpha^{-1}}=m$ is a strictly positive measure on
$\nabla_{\mu\cdot\alpha^{-1}}.$ Then the equalities
$\nu(x)=m(\frac{d\nu}{d\mu}\cdot x)= \mu(\frac{dm}{d\mu}
\cdot\frac{d\nu}{dm}\cdot x)$ hold, and we obtain from Theorems
4.6 and 4.2 the following

\begin{teo}

(i) Algebras $L_{log}(\nabla_\mu)$ and $L_{log}(\nabla_\nu)$ are
isometric iff the condition $C_1(\frac{dm}{d\mu})$ holds.

(ii) Algebras $L_{log}(\nabla_\mu)$ and $L_{log}(\nabla_\nu)$ are
isomorphic iff the following conditions hold:
$C_1(\frac{dm}{d\mu})$ and $C_2(\frac{d\nu}{dm}).$
\end{teo}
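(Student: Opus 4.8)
The statement should follow by assembling Theorem~\ref{t42} (more precisely, the list of equivalences of Theorem 4.6) and Theorem~\ref{t41}, once the passage from $\mu$ to $\nu$ is factored through the auxiliary measure $m$. The plan is to use the chain rule $h=\frac{d\nu}{d\mu}=\frac{dm}{d\mu}\cdot\frac{d\nu}{dm}$ recorded above the statement, which splits this passage into a Boolean step, $\mu$ to $m$, carried out by the automorphism $\alpha$, followed by a Radon--Nikodym step, $m$ to $\nu$, which does not move the Boolean algebra.

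First I would record that $J_\alpha$, the $\ast$-automorphism of $L_0(\nabla)$ induced by $\alpha$, restricts to a bijective linear isometry and algebra isomorphism
$$J_\alpha\colon\bigl(L_{log}(\nabla_\mu),\,\|\cdot\|_{log,\mu}\bigr)\longrightarrow\bigl(L_{log}(\nabla_m),\,\|\cdot\|_{log,m}\bigr),$$
exactly as in the proofs of Theorems~\ref{t32} and \ref{t42} (via \cite[Proposition 3]{AC}), because $m=\mu\cdot\alpha^{-1}$ makes $\alpha$ measure preserving between $(\nabla,\mu)$ and $(\nabla,m)$. Hence every comparison of $L_{log}(\nabla_\mu)$ with $L_{log}(\nabla_\nu)$ is transported by $J_\alpha$ to the corresponding comparison of $L_{log}(\nabla_m)$ with $L_{log}(\nabla_\nu)$, which now involves two measures, $m$ and $\nu$, on one and the same homogeneous algebra $\nabla$, joined by the single derivative $\frac{d\nu}{dm}$.

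For (i), I would apply the equivalence $(i)\Leftrightarrow(ii)$ of Theorem 4.6 to the pair $m,\nu$: $L_{log}(\nabla_m)$ is isometric to $L_{log}(\nabla_\nu)$ iff $\frac{1}{m(\Omega)}\int_\Omega\frac{d\nu}{dm}\,dm=1$, i.e. iff $\nu(\Omega)=m(\Omega)$, and combining with $J_\alpha$ this is precisely when $L_{log}(\nabla_\mu)$ is isometric to $L_{log}(\nabla_\nu)$. It remains to rewrite $\nu(\Omega)=m(\Omega)$ in the normalization of the paper: using $\int_\Omega\frac{dm}{d\mu}\,d\mu=m(\Omega)$ it becomes $\frac{1}{\mu(\Omega)}\int_\Omega\frac{dm}{d\mu}\,d\mu=1$, i.e. $C_1(\frac{dm}{d\mu})$; and Theorem 4.6 $(iv)\Leftrightarrow(vi)$ shows that such an $\alpha$ (hence such an $m$) exists exactly when the relevant masses agree. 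For (ii), keeping the reduction by $J_\alpha$, I would invoke Theorem~\ref{t41} together with \cite[Theorem 2]{AC}: the subalgebra $L_{log}(\nabla_m)=\Phi(L_{log}(\nabla_\mu))$ of $L_0(\nabla)$ coincides with $L_{log}(\nabla_\nu)$ — so that $J_\alpha$ is already an algebra isomorphism onto $L_{log}(\nabla_\nu)$ — iff $\frac{d\nu}{dm},\frac{dm}{d\nu}\in L_\infty(\nabla_\nu)$, that is, iff $C_2(\frac{d\nu}{dm})$; since the isomorphism is required here to be built on the measure-preserving $\alpha$ furnished by part (i), the condition $C_1(\frac{dm}{d\mu})$ is needed as well, which yields both halves of (ii).

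The step I expect to be the main obstacle is the careful translation in part (i): being explicit about whether $\alpha$, and hence $m$, is given or is to be chosen, and checking that $C_1(\frac{dm}{d\mu})$ is genuinely equivalent to the mass identity $\mu(\Omega)=m(\Omega)=\nu(\Omega)$ rather than vacuous — i.e. that the normalization in the definition of $C_1$, together with the factorization $\frac{d\nu}{d\mu}=\frac{dm}{d\mu}\cdot\frac{d\nu}{dm}$, really pins the condition down. Once that is settled, the remainder is a mechanical combination of Theorems 4.6, \ref{t41} and \cite[Proposition 3]{AC}, with no new inequalities to establish.
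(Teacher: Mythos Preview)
Your approach is essentially the paper's own: the paper does not give a separate proof of this theorem but simply records the factorization $\nu(x)=m(\frac{d\nu}{d\mu}\cdot x)=\mu(\frac{dm}{d\mu}\cdot\frac{d\nu}{dm}\cdot x)$ and then says the result follows from Theorems~4.6 and~4.2 (i.e.\ Theorem~\ref{t41}), which is exactly the reduction via $J_\alpha$ to the pair $(m,\nu)$ that you carry out. Your worry about whether $\alpha$ (hence $m$) is given or to be chosen, and whether $C_1(\frac{dm}{d\mu})$ is automatic since $m(\Omega)=\mu(\alpha^{-1}(\Omega))=\mu(\Omega)$, is a genuine ambiguity in the paper's formulation rather than a defect in your argument.
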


Further, we obtain from Theorem 1 and ([4], Theorem 2) the
following

\begin{teo}

$(i)$ $L_{\log}(\nabla_\mu)$ and $L_{\log}(\nabla_m)$ are isometric and $L_{\log}(\nabla_\mu)$ and $L_{\log}(\nabla_m)$ are coincide if and only if the conditions $C_1(\frac{dm}{d\mu})$ and $C_2(\frac{dm}{d\mu})$ are satisfied.

$(ii)$ $L_{\log}(\nabla_\mu)$  and $L_{\log}(\nabla_m)$ are isometric and $L_{\log}(\nabla_\mu)$  and $L_{\log}(\nabla_m)$ are non coincide if and only if the condition $C_1(\frac{dm}{d\mu})$ is satisfied, but $C_2(\frac{dm}{d\mu})$ is not satisfied.

$(iii)$ $L_{\log}(\nabla_\mu)$ and $L_{\log}(\nabla_m)$ are non isometric and $L_{\log}(\nabla_\mu)$ and $L_{\log}(\nabla_m)$ are coincide if and only if the condition $C_1(\frac{dm}{d\mu})$ is not satisfied, but $C_2(\frac{dm}{d\mu})$ is satisfied.

$(iv)$ $L_{\log}(\nabla_\mu)$ и $L_{\log}(\nabla_m)$  are non
isometric and $L_{\log}(\nabla_\mu)$ and $L_{\log}(\nabla_m)$ are
non coincide if and only if the conditions $C_1(\frac{dm}{d\mu})$
and $C_2(\frac{dm}{d\mu})$ are not satisfied.
\end{teo}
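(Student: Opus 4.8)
The plan is to derive all four equivalences from two biconditionals that are already available in the paper, together with the trivial observation that (i)--(iv) are exactly the four Boolean combinations of two conditions and their negations. Write $P$ for the assertion ``$L_{\log}(\nabla_\mu)$ and $L_{\log}(\nabla_m)$ are isometric'' and $Q$ for ``$L_{\log}(\nabla_\mu)$ and $L_{\log}(\nabla_m)$ coincide''. First I would prove $P\Leftrightarrow C_1(\tfrac{dm}{d\mu})$ and $Q\Leftrightarrow C_2(\tfrac{dm}{d\mu})$; then (i)--(iv) drop out by conjoining these two equivalences, and their negations, in the four possible ways.

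For the isometry half, note that the measure $m$ introduced above is strictly positive and finite on the complete homogeneous Boolean algebra $\nabla$, hence $m\sim\mu$, so the pair $(\mu,m)$ meets the hypotheses of Theorem~\ref{t32}. Applying Theorem~\ref{t32} to $(\mu,m)$ --- equivalently, applying Theorem~4.7(i) --- shows that $L_{\log}(\nabla_\mu)$ and $L_{\log}(\nabla_m)$ are isometric if and only if $\int_{\Omega}\tfrac{dm}{d\mu}\,d\mu = m(\Omega) = \mu(\Omega)$; since $\nabla$ is homogeneous this is precisely $C_1(\tfrac{dm}{d\mu})$. For the coincidence half I would invoke the coincidence criterion $\cite[\text{Theorem }2]{AC}$ (stated above as Theorem~4.1) for the same pair $(\mu,m)$: $L_{\log}(\nabla_\mu)$ and $L_{\log}(\nabla_m)$ coincide as subalgebras of $L_0(\nabla)$ if and only if $\tfrac{dm}{d\mu}\in L_\infty(\nabla)$ and $\tfrac{d\mu}{dm}\in L_\infty(\nabla)$, which is exactly $C_2(\tfrac{dm}{d\mu})$.

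Granting $P\Leftrightarrow C_1(\tfrac{dm}{d\mu})$ and $Q\Leftrightarrow C_2(\tfrac{dm}{d\mu})$, the theorem is immediate: $P\wedge Q\Leftrightarrow C_1\wedge C_2$ is (i), $P\wedge\neg Q\Leftrightarrow C_1\wedge\neg C_2$ is (ii), $\neg P\wedge Q\Leftrightarrow\neg C_1\wedge C_2$ is (iii), and $\neg P\wedge\neg Q\Leftrightarrow\neg C_1\wedge\neg C_2$ is (iv). There is no genuinely hard step; the two points deserving care --- and the closest thing to an obstacle --- are: (a) checking that Theorem~\ref{t32} and the coincidence criterion really do apply to the intermediate pair $(\mu,m)$ and not only to $(\mu,\nu)$, which is exactly where the identification $L_{\log}(\nabla_\nu)=L_{\log}(\nabla_m)$ from Section~4 is used; and (b) keeping in mind that ``coincide'' here means set-theoretic equality of the two algebras and carries no information about $F$-isometry (equality of the underlying sets does not force the identity, or any other map, to be an $F$-isometry), so that $C_1$ and $C_2$ are logically independent and none of (i)--(iv) is vacuous. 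For instance $\tfrac{dm}{d\mu}\equiv 2$ realizes case (iii), while an unbounded $\tfrac{dm}{d\mu}$ normalized so that $m(\Omega)=\mu(\Omega)$ (with $\tfrac{d\mu}{dm}$ bounded) realizes case (ii); the remaining cases are handled symmetrically.
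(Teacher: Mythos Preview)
Your proposal is correct and is essentially the paper's own argument: the paper does not give a separate proof of this theorem but simply states that it is obtained from the isometry criterion (Theorem~\ref{t32}/Theorem~4.6) and the coincidence criterion of \cite{AC} (Theorem~4.1), which is exactly your reduction to $P\Leftrightarrow C_1$ and $Q\Leftrightarrow C_2$ followed by the four Boolean combinations. Your added remarks on applying the criteria to the pair $(\mu,m)$ and on the logical independence of $C_1$ and $C_2$ are accurate elaborations of what the paper leaves implicit.
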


An example of the fulfillment of statement $(ii)$ for a
homogeneous Boolean algebra is the case when

\[ h = \begin{cases}
x^{-1/2}, & \text{ {at } $0\leq x\leq1/25$}\\
-25x/32+33/32, & \text{{at }     $1/25<x\leq1.$}
\end{cases} \]
In this case,  $h$ satisfies condition $C_1(h),$ but does not
satisfy condition $C_2(h)$ since $h$ is unlimited. Examples for
statements $(i),$ $(iii),$ $(iv)$ are constructed similarly.

\section{ Closedness criterion of $L_{log}^{(\nu)}(\nabla_\mu)$ with respect to multiplication}

Below we give necessary and sufficient conditions for the
existence of an isomorphism $\Phi: L_0(\nabla_{\mu}) \to
L_0(\nabla_{\nu})$ that use the internal structure of Boolean
algebras  $\nabla_{\mu},$.

Let $\mu$ and $\nu$ be equivalent measures, and
$h=\frac{d\mu}{d\nu}$ be the Radon-Nikodym derivative of the
measure $\nu$ with respect to the measure $\mu.$

Consider now the following analog of the $F$-space of
log-integrable measurable functions
$$
L^{(\nu)}_{\log}(\nabla_\mu)=\{f \in L_{0}(\nabla):
\int\limits_{\Omega}\log(1+h|f|)d\mu < + \infty\},
$$
with the $F$-norm
$\|f\|^{(\nu)}_{log,\mu}=\int\limits_{\Omega}\log(1+h|f|)d\mu.$\ \ \ \ (3)
\begin{teo}
The function $\|f\|^{(\nu)}_{log,\mu}$ satisfies the following
conditions:
\\
$(i)$. $\|f\|^{(\nu)}_{log,\mu}>0$ for all $0 \neq f \in L_{\log}(\nabla_\mu);$

$(ii)$. $\|\alpha f\|^{(\nu)}_{log,\mu}$  for all $f \in
L_{\log}(\nabla_\mu)$ and real number $\alpha$ with $|\alpha|\leq
1;$

$(iii)$. $\lim_{\alpha\to 0}\|\alpha f\|^{(\nu)}_{log,\mu}=0$ for all $f \in
L_{\log}(\nabla_\mu);$

$(iv)$. $\|f+g\|^{(\nu)}_{log,\mu}\leq\|f\|^{(\nu)}_{log,\mu}+\|g\|^{(\nu)}_{log,\mu}$ for all $f,
g \in L_{\log}(\nabla_\mu).$
\end{teo}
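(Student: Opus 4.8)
The plan is to verify the four $F$-norm axioms $(i)$--$(iv)$ for $\|f\|^{(\nu)}_{\log,\mu}=\int_\Omega\log(1+h|f|)\,d\mu$ by reducing each to the corresponding property already established for the ordinary $\log$-norm in \cite{DSZ}. The crucial observation is that $h=\frac{d\nu}{d\mu}>0$ $\mu$-almost everywhere (since $\mu\sim\nu$), so $h|f|=0$ if and only if $f=0$; this immediately gives $(i)$, because $\log(1+h|f|)>0$ on a set of positive $\mu$-measure whenever $f\neq 0$, hence its integral is strictly positive.

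For $(ii)$ I would use monotonicity of $t\mapsto\log(1+t)$ together with $|\alpha|\le 1$: then $h|\alpha f|=|\alpha|\,h|f|\le h|f|$ pointwise, so $\log(1+h|\alpha f|)\le\log(1+h|f|)$ pointwise, and integrating over $\Omega$ yields $\|\alpha f\|^{(\nu)}_{\log,\mu}\le\|f\|^{(\nu)}_{\log,\mu}$ (I note the statement of $(ii)$ in the excerpt is garbled — it should read $\|\alpha f\|^{(\nu)}_{\log,\mu}\le\|f\|^{(\nu)}_{\log,\mu}$). For $(iii)$, as $\alpha\to 0$ we have $\log(1+h|\alpha f|)\downarrow 0$ pointwise and, for $|\alpha|\le 1$, $\log(1+h|\alpha f|)\le\log(1+h|f|)$, which is integrable by assumption $f\in L^{(\nu)}_{\log}(\nabla_\mu)$; dominated convergence then gives $\lim_{\alpha\to 0}\|\alpha f\|^{(\nu)}_{\log,\mu}=0$. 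For $(iv)$ the key inequality is the elementary pointwise estimate $\log(1+h|f+g|)\le\log(1+h|f|)\,+\,\log(1+h|g|)$, which follows from $h|f+g|\le h|f|+h|g|$ and the subadditivity-type bound $\log(1+a+b)\le\log(1+a)+\log(1+b)$ for $a,b\ge 0$ (equivalently $(1+a+b)\le(1+a)(1+b)$); integrating gives the triangle inequality. In fact all four arguments are a verbatim transcription of \cite[Lemma 2.1]{DSZ}, only with $|f|$ replaced throughout by $h|f|$, and $h$ being a fixed nonnegative measurable function does not interfere with any of the pointwise inequalities.

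There is essentially no serious obstacle here; the only point requiring a word of care is that every displayed integral is assumed finite precisely because $f$ (and $g$) lie in $L^{(\nu)}_{\log}(\nabla_\mu)$, so the dominated convergence argument in $(iii)$ has a legitimate integrable majorant, and the triangle inequality in $(iv)$ is an inequality between finite quantities. One might also remark that $L^{(\nu)}_{\log}(\nabla_\mu)$ is closed under addition — a byproduct of the pointwise inequality used in $(iv)$ — and under scalar multiplication, so that $\|\cdot\|^{(\nu)}_{\log,\mu}$ is genuinely defined on a vector space, which is implicit in asserting it is an $F$-norm.
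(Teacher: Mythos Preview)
Your proof is correct and follows essentially the same approach as the paper: both reduce each of the four axioms to the corresponding property of the ordinary $\log$-norm from \cite[Lemma~2.1]{DSZ} via the identity $\|f\|^{(\nu)}_{\log,\mu}=\|hf\|_{\log,\mu}$ (you carry out the pointwise arguments directly, the paper cites the lemma, but the content is the same). Your observation that the statement of $(ii)$ is garbled and that the ambient space should really be $L^{(\nu)}_{\log}(\nabla_\mu)$ rather than $L_{\log}(\nabla_\mu)$ is also correct.
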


\begin{proof}
$$(i).\parallel f\parallel^{(\nu)}_{log,\mu}=0\Longrightarrow\int\limits_{\Omega}\log(1+h(x)|f(x)|)d\mu=0\Longrightarrow log(1+h(x)|f(x)|)=0\Longrightarrow$$
$$\Longrightarrow1+h(x)|f(x)|=1\Longrightarrow h(x)|f(x)|=0.$$
Since $S(h)=1,$ we have $f=0.$

If $f\in L^{\nu}_{log}(\nabla_{\mu}),$ then $hf\in
L_{log}(\nabla_{\mu}).$ Therefore statement $(ii)$ of this theorem
follows from Lemma $(ii)$ [4] and obvious relationships.
$$(ii).
\parallel \alpha f\parallel^{(\nu)}_{log,\mu}=\|\alpha h f\|_{\log,\mu}\leq \|h f\|_{\log,\mu}=\|f\|_{\log,\mu}^{(\nu)},$$ for each $\alpha$ with $|\alpha|\leq 1$.

$(iii)$ Using $(iii)$ from Lemma 2.1 [4], we obtain
$\lim\limits_{\alpha\rightarrow0}\parallel \alpha
f\parallel^{(\nu)}_{log,\mu}=\lim\limits_{\alpha\rightarrow0}\int\limits_{\Omega}\log(1+\alpha
h|f|)d\mu=\lim\limits_{\alpha\rightarrow0}\parallel \alpha
f\parallel_{log,\mu}=0$

$$(iv).\parallel f+g\parallel^{(\nu)}_{log,\mu}=\int\limits_{\Omega}\log(1+h(x)|f(x)+g(x)|)d\mu\leq\int\limits_{\Omega}(\log(1+h(x)|f(x)|)+$$
$$+log(1+h(x)|g(x)|))d\mu=\|f\|^{(\nu)}_{log,\mu}+\|g\|^{(\nu)}_{log,\mu}$$
\end{proof}

Unlike the case of $F$-spaces $L_{log}(\nabla_\mu)$ and
$L_{log}(\nabla_\nu)$, the pairs of $F$-spaces
$L_{log}(\nabla_\mu)$ and $L_{log}^{(\nu)}(\nabla_\mu)$ are
already isometric.
\begin{teo}
For equivalent and unequal measures $\mu$ and $\nu,$ the pairs of
$F$-spaces $L_{log}(\nabla_\mu)$ and $L^{(\nu)}_{log}(\nabla_\mu)$
are isometric.
\end{teo}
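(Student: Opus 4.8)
The plan is to write down an explicit surjective linear isometry, namely multiplication by a fixed strictly positive function. Put $h=\frac{d\nu}{d\mu}$, the Radon--Nikodym derivative appearing in the definition of $L^{(\nu)}_{\log}(\nabla_\mu)$. Since $\mu\sim\nu$, we have $0<h<\infty$ ($\mu$-a.e.), so $h^{-1}=\frac{d\mu}{d\nu}\in L_0(\nabla)$ (as recorded at the end of Section~2), and hence $h^{-1}f\in L_0(\nabla)$ for every $f\in L_0(\nabla)$. Define
$$U\colon L_0(\nabla)\to L_0(\nabla),\qquad U(f)=h^{-1}f .$$
Being multiplication by the fixed element $h^{-1}$, the map $U$ is linear.

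First I would check simultaneously that $U$ carries $L_{\log}(\nabla_\mu)$ into $L^{(\nu)}_{\log}(\nabla_\mu)$ and that it is $F$-norm preserving. This is the one substitution at the heart of the argument: for any $f\in L_0(\nabla)$,
$$
\|U(f)\|^{(\nu)}_{\log,\mu}=\int\limits_{\Omega}\log\bigl(1+h\,|h^{-1}f|\bigr)\,d\mu
=\int\limits_{\Omega}\log(1+|f|)\,d\mu=\|f\|_{\log,\mu},
$$
so the left-hand side is finite precisely when the right-hand side is; thus $f\in L_{\log}(\nabla_\mu)$ iff $U(f)\in L^{(\nu)}_{\log}(\nabla_\mu)$, and on $L_{\log}(\nabla_\mu)$ the $F$-norm is preserved. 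Combining linearity with this identity gives, for $f,g\in L_{\log}(\nabla_\mu)$,
$$\rho\bigl(U(f),U(g)\bigr)=\|U(f)-U(g)\|^{(\nu)}_{\log,\mu}=\|U(f-g)\|^{(\nu)}_{\log,\mu}=\|f-g\|_{\log,\mu}=\rho(f,g),$$
so $U$ is an isometry of the metric spaces; in particular it is injective.

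It remains to verify surjectivity. Given $g\in L^{(\nu)}_{\log}(\nabla_\mu)$, set $f=h\,g\in L_0(\nabla)$. Then $\|f\|_{\log,\mu}=\int_{\Omega}\log(1+|hg|)\,d\mu=\int_{\Omega}\log(1+h|g|)\,d\mu=\|g\|^{(\nu)}_{\log,\mu}<\infty$, whence $f\in L_{\log}(\nabla_\mu)$ and $U(f)=h^{-1}hg=g$. Therefore $U$ is a surjective linear isometry from $L_{\log}(\nabla_\mu)$ onto $L^{(\nu)}_{\log}(\nabla_\mu)$, which is exactly the assertion. There is essentially no obstacle here; the only point requiring a word of care — and the single place where $\mu\sim\nu$ enters — is that $h^{-1}$ is a genuine element of $L_0(\nabla)$, i.e. $h$ is finite and a.e.\ nonzero. (The hypothesis $\mu\neq\nu$ is not actually used: for $\mu=\nu$ the map $U$ is simply the identity. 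It is stated only to contrast this result with the non-isometry phenomena of Sections~3--4, where $h\not\equiv\mathbf 1$ forces $L_{\log}(\nabla_\mu)$ and $L_{\log}(\nabla_\nu)$ to be non-isometric.)
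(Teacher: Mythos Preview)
Your proof is correct and is essentially the same as the paper's: both exhibit multiplication by $h^{-1}$ (with $h=\tfrac{d\nu}{d\mu}$) as the explicit isometry, via the identity $\|h^{-1}f\|^{(\nu)}_{\log,\mu}=\int_\Omega\log(1+h|h^{-1}f|)\,d\mu=\|f\|_{\log,\mu}$. Your write-up is in fact a bit more careful than the paper's (you verify surjectivity explicitly and correctly note that the hypothesis $\mu\neq\nu$ plays no role in the argument).
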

\begin{proof}
In fact, for equivalent measures $\mu$ and $\nu,$ the map $U:
L^{(\nu)}_{log}(\nabla_ \mu)\to L_{log}(\nabla_ \mu),$ defined by
the equality $U(f)=h^{-1}f,$ $f\in L^{(\nu)}_{log}(\nabla_ \mu),$
$h=\frac{d\nu}{d\mu},$ is a linear bijection from
$L_{log}^{(\nu)}(\nabla_\mu) $ onto $L_{log}(\nabla_\mu),$
moreover
$$
\|U(f)\|^{(\nu)}_{log,\mu} = \int\limits_{\Omega}\log(1+h|h^{-1}f|)d\mu=\int\limits_{\Omega}\log(1+|f|)d\mu=
\|f\|_{log,\mu}
$$
for all $f \in L^{(\nu)}_{log}(\nabla_ \mu).$
\end{proof}

\begin{teo}
The space $L_{\log}^{(\nu)}(\nabla_\mu )$ will be an algebra if
and only $h^{-1} \in L_{\log}(\nabla_\mu )$
\end{teo}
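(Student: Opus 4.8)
The plan is to prove the equivalence by showing that $L_{\log}^{(\nu)}(\nabla_\mu)$ is closed under multiplication precisely when $h^{-1}\in L_{\log}(\nabla_\mu)$. Throughout write $h=\frac{d\nu}{d\mu}$, so $h>0$ $\mu$-a.e. The key observation, already available from Theorem 5.3, is that multiplication by $h^{-1}$ is an isometric linear bijection $U\colon L_{\log}^{(\nu)}(\nabla_\mu)\to L_{\log}(\nabla_\mu)$. Since $L_{\log}(\nabla_\mu)$ is an algebra, $U$ transports the multiplicative structure of $L_{\log}(\nabla_\mu)$ to the bilinear operation $f\ast g := h\cdot f\cdot g$ on $L_{\log}^{(\nu)}(\nabla_\mu)$; that twisted product always stays inside the space. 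So the genuine question is whether the \emph{ordinary} pointwise product $f\cdot g$ stays inside, and the discrepancy between $\ast$ and $\cdot$ is exactly the factor $h^{-1}$.

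**Sufficiency.**
Assume $h^{-1}\in L_{\log}(\nabla_\mu)$. Take $f,g\in L_{\log}^{(\nu)}(\nabla_\mu)$, i.e.\ $hf,hg\in L_{\log}(\nabla_\mu)$. Then $(hf)(hg)=h^2fg\in L_{\log}(\nabla_\mu)$ because $L_{\log}(\nabla_\mu)$ is an algebra. Now $h\cdot fg = h^{-1}\cdot(h^2fg)$, a product of two elements of $L_{\log}(\nabla_\mu)$, hence again in $L_{\log}(\nabla_\mu)$; thus $fg\in L_{\log}^{(\nu)}(\nabla_\mu)$. Concretely one can make this quantitative via the elementary inequality $\log(1+ab)\le\log(1+a)+\log(1+b)$ for $a,b\ge 0$ applied to $a=h^{-1}$, $b=h^2|fg|$, giving
$$
\|fg\|^{(\nu)}_{\log,\mu}=\int_\Omega\log(1+h|fg|)\,d\mu\le \int_\Omega\log(1+h^{-1})\,d\mu+\int_\Omega\log(1+h^2|fg|)\,d\mu<\infty,
$$
and the last integral is finite since $hf,hg\in L_{\log}(\nabla_\mu)$ and that space is an algebra.

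**Necessity — the main obstacle.**
The real work is the converse: if $L_{\log}^{(\nu)}(\nabla_\mu)$ is closed under multiplication, then $h^{-1}\in L_{\log}(\nabla_\mu)$. The natural strategy is to produce a single explicit element whose square (or product with a companion) forces $h^{-1}$ to be log-integrable. The constant function $\mathbf 1$ need not lie in $L_{\log}^{(\nu)}(\nabla_\mu)$ — indeed $\mathbf 1\in L_{\log}^{(\nu)}(\nabla_\mu)$ iff $h\in L_{\log}(\nabla_\mu)$ — so one cannot simply square $\mathbf 1$. Instead I would take $f=h^{-1}$ itself: then $hf=\mathbf 1\in L_{\log}(\nabla_\mu)$ trivially (the measure is finite), so $h^{-1}\in L_{\log}^{(\nu)}(\nabla_\mu)$. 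By the assumed closure under multiplication, $f^2=h^{-2}\in L_{\log}^{(\nu)}(\nabla_\mu)$, which means $h\cdot h^{-2}=h^{-1}\in L_{\log}(\nabla_\mu)$ — exactly the desired conclusion. The point to verify carefully is that $h^{-1}\in L_0(\nabla)$ (true since $h>0$ a.e., as $\mu\sim\nu$) and that $hf=\mathbf1$ genuinely has $\|\mathbf 1\|_{\log,\mu}=\mu(\Omega)\log 2<\infty$, so $h^{-1}$ really is an admissible test function. This single substitution collapses the necessity direction, so there is in fact no serious obstacle once one realizes that $h^{-1}$ is always a member of $L_{\log}^{(\nu)}(\nabla_\mu)$; the temptation to look for a more elaborate argument is the only trap. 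I would close by remarking that the symmetry here mirrors Theorem 5.3: under $U$, the element $h^{-1}\in L_{\log}^{(\nu)}(\nabla_\mu)$ corresponds to the identity $\mathbf 1\in L_{\log}(\nabla_\mu)$, and asking whether $\mathbf 1$'s preimage squares back into the space is precisely asking whether $h^{-1}$ is log-integrable.
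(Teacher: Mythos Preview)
Your proof is correct and follows essentially the same route as the paper's: for sufficiency you factor $h\cdot fg = h^{-1}\cdot(hf)(hg)$ and use that $L_{\log}(\nabla_\mu)$ is an algebra containing $h^{-1}$, $hf$, $hg$; for necessity you test with $f=h^{-1}$ (which always lies in $L_{\log}^{(\nu)}(\nabla_\mu)$ since $hf=\mathbf 1$) and observe that $f^2\in L_{\log}^{(\nu)}(\nabla_\mu)$ forces $h^{-1}\in L_{\log}(\nabla_\mu)$. The paper phrases the necessity as a contrapositive but uses the identical test element and computation; your additional remarks about the isometry $U$ and the twisted product add helpful context but do not change the underlying argument.
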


\begin{proof} Consider the map
$H:{{L}_{\log }}(\nabla_\mu )\to {{L}^{(\nu)}_{\log }}(\nabla_\mu
)$ given by the formula $H(x)={{h}^{-1}x}.$

Let ${{h}^{-1}}\in {{L}_{\log }}(\nabla_\mu ).$ Then we have for
$f,g\in L_{log}^{(\nu)}(\nabla_\mu):$
$$
f^{'}=hf\in L_{log}(\nabla_\mu),\ \ \ \ \ \  g^{'}=hg\in
L_{log}(\nabla_\mu). \eqno (4)
$$

By virtue of Lemma 2.3.(b)[4], for $f,g\in {{L}^{(\nu)}_{\log
}}(\nabla_\mu ),$ the following relations take place:
$$
\left\| fg \right\|^{(\nu)}_{\log,\mu }={{\left\| {{h}}fg
\right\|}_{\log,\mu }}={{\left\|
h{{{h}^{-1}}{f}^{'}}{h}^{-1}{{g}^{'}} \right\|}_{\log,\mu
}}={{\left\| {{{h}^{-1}}{f}^{'}}{{g}^{'}} \right\|}_{\log,\mu
}}\le
$$ $$
\le {{\left\| {{f}^{'}} \right\|}_{\log,\mu }}+{{\left\| {{g}^{'}}
\right\|}_{\log,\mu }}+{{\left\| {{h}^{-1}} \right\|}_{\log,\mu
}}.
$$

We get the inequality  $\left\| fg \right\|^{(\nu)}_{\log,\mu }\le
{{\left\| {{f}^{'}} \right\|}_{\log,\mu }}+{{\left\| {{g}^{'}}
\right\|}_{\log,\mu }}+{{\left\| {{h}^{-1}} \right\|}_{\log,\mu
}}.$ This inequality and relation (4) imply that the space
${{L}^{(\nu)}_{\log }}(\nabla_\mu )$ is closed with respect to
multiplication.

Let ${{h}^{-1}}\notin {{L}_{\log }}(\nabla_\mu ).$
Since $\mathbf{1}\in {{L}_{\log }}(\nabla_\mu ),$ we get
$f={{h}^{-1}}\mathbf{1}\in {{L}^{(\nu)}_{\log }}(\nabla_\mu ).$
Really, $\left\| f \right\|_{\log,\mu }^{(\nu) }=\left\|
{{h}^{-1}} \right\|_{\log,\mu }^{(\nu) }={{\left\| \mathbf{1}
\right\|}_{\log,\mu }}<\infty $. Consider now the element
${{f}^{2}}.$ We obtain from equalities
$$
\int\limits_{\Omega} (log(\mathbf{1}+\left| {{h}}{{f}^{2}}{} \right|))d\mu=\int\limits_{\Omega}
(log(\mathbf{1} +\left|h {{h}^{-1}}{{h}^{-1}} \right|))\mu=\int\limits_{\Omega}
(log(\mathbf{1} +\left| {{h}^{-1}} \right|))\mu=\infty
$$
that ${{f}^{2}}\notin {{L}^{(\nu)}_{\log }}(\nabla_\mu ).$
Therefore the space ${{L}^{(\nu)}_{\log }}(\nabla_\mu )$ is not an
algebra for ${{h}^{-1}}\notin {{L}_{\log }}(X,\mu ).$
\end{proof}

Unlike to ${{L}^{(\nu)}_{\log }}(\nabla_\mu ),$ the space
${{L}^{\nu}_{\log }}(\nabla_\mu )={{L}_{\log }}(\nabla_\nu )$ is
an algebra for any different $\mu$ and $\nu$ [4].

 If $h^{-1}\notin L_{log}(\nabla_\mu),$ then $L_{\log}^{(\nu)}(\nabla_ \mu)$ not algebra. So it makes no sense to talk an isomorphism of  $L_{\log}(\nabla_ \mu)$ and $L_{\log}^{(\nu)}(\nabla_ \mu)$, but they are
isometric.

R. Abdullaev

Tashkent University of Information Technologies,

Tashkent, 100200, Uzbekistan,

e-mail arustambay@yandex.com

B. Madaminov

Urgench state Unversity,

Urgench, 220100, Uzbekistan,

e-mail aabekzod@mail.ru

\end{document}